\renewcommand{\mathcal}{\mathscr}
\theoremstyle{definition}
\newtheorem{ntn}{Notation}[section]
\theoremstyle{plain}
\newtheorem{lem}[ntn]{Lemma}
\newtheorem{prp}[ntn]{Proposition}
\newtheorem{thm}[ntn]{Theorem}
\newtheorem{cor}[ntn]{Corollary}
\newtheorem*{cnj*}{Conjecture}
\theoremstyle{remark}
\newtheorem{exa}[ntn]{Example}
\newtheorem{rmk}[ntn]{Remark}
\numberwithin{equation}{section}
\newcommand{\ideal}[1]{{\left\langle#1\right\rangle}}
\newcommand{\xymat}{\SelectTips{cm}{}\xymatrix}
\newcommand{\into}{\hookrightarrow}
\newcommand{\p}{\partial}
\newcommand{\wh}{\widehat}
\newcommand{\llangle}{\langle\langle}
\newcommand{\rrangle}{\rangle\rangle}
\renewcommand{\AA}{\mathbb{A}}
\newcommand{\fA}{\mathfrak{A}}
\renewcommand{\aa}{\mathfrak{a}}
\newcommand{\fB}{\mathfrak{B}}
\newcommand{\mm}{\mathfrak{m}}
\newcommand{\pp}{\mathfrak{p}}
\newcommand{\ZZ}{\mathds{Z}}
\DeclareMathOperator{\Der}{Der}
\DeclareMathOperator{\embdim}{embdim}
\DeclareMathOperator{\Hom}{Hom}
\DeclareMathOperator{\rk}{rk}
\DeclareMathOperator{\Sing}{Sing}
\DeclareMathOperator{\Spec}{Spec}
\begin{document}

\title[Derivations on complete intersections]{Derivations of negative degree on quasihomogeneous isolated complete intersection singularities}

\author[M.~Granger]{Michel Granger}
\address{
M.~Granger\\
Universit\'e d'Angers, D\'epartement de Math\'ematiques\\ 
LAREMA, CNRS UMR n\textsuperscript{o}6093\\ 
2 Bd Lavoisier\\ 
49045 Angers\\ 
France}
\email{\href{mailto:granger@univ-angers.fr}{granger@univ-angers.fr}}

\author[M.~Schulze]{Mathias Schulze}
\address{M.~Schulze\\
Department of Mathematics\\
University of Kaiserslautern\\
67663 Kaiserslautern\\
Germany}
\email{\href{mailto:mschulze@mathematik.uni-kl.de}{mschulze@mathematik.uni-kl.de}}
\thanks{The research leading to these results has received funding from the People Programme (Marie Curie Actions) of the European Union's Seventh Framework Programme (FP7/2007-2013) under REA grant agreement n\textsuperscript{o} PCIG12-GA-2012-334355.}

\date{\today}

\subjclass{13N15, 14M10 (Primary) 14H20 (Secondary)}

\keywords{complete intersection, derivation, singularity}

\begin{abstract}
J.~Wahl conjectured that every quasihomogeneous isolated normal singularity admits a positive grading for which there are no derivations of negative weighted degree. 
We confirm his conjecture for quasihomogeneous isolated complete intersection singularities of either order at least $3$ or embedding dimension at most $5$.
For each embedding dimension larger than $5$ (and each dimension larger than $3$), we give a counter-example to Wahl's conjecture.
\end{abstract}

\maketitle
\tableofcontents

\section{Introduction}

By a singularity we mean a quotient $A$ of a convergent power series ring over a valued field $K$ of characteristic zero (see \S\ref{27}).
We use the acronym \emph{negative derivation} for a derivation of negative weighted degree on a quasihomogeneous singularity.
The question of existence of such negative derivations has important consequences in rational homotopy theory (see \cite[Thm. A]{Mei81}) and in deformation theory (see~\cite[Thm.~3.8]{Wah81}).

By a result of Kantor~\cite{Kan79}, quasihomogeneous curve and hypersurface singularities do not admit any negative derivations. 
J.~Wahl~\cite[Thm.~2.4, Prop.~2.8]{Wah81} reached the same conclusion in (the much deeper) case of quasihomogeneous normal surface singularities.
Motivated by his cohomological characterization of projective space in \cite{Wah83a}, he formulates the following conjecture in \cite[Conj.~1.4]{Wah83b}.

\begin{cnj*}[Wahl]
Let $R$ be a normal graded ring, with isolated singularity.
Then there is a normal graded $\bar R$, with $\hat R\cong\hat{\bar R}$, so that $\bar R$ has no derivations of negative weight.
\end{cnj*}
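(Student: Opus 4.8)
A full proof is out of reach---indeed, as will emerge below, the conjecture is \emph{false} in general---so the plan is to determine the complete intersection case completely: to confirm Wahl's conjecture there under an extra hypothesis, and to produce counterexamples otherwise. Fix a quasihomogeneous isolated complete intersection singularity $R = P/I$, with $P = K\{x_1,\dots,x_n\}$, $I = (f_1,\dots,f_k)$, each $f_j$ quasihomogeneous of degree $d_j$ for positive weights $w_1,\dots,w_n$; since $R$ is normal, $\dim R = n-k\ge 2$. A preliminary reduction disposes of the ``regrading'' built into the statement: for a quasihomogeneous complete intersection the grading is rigid---all $\CC^*$-actions on $\hat R$ are conjugate---so ``there is a normal graded $\bar R$ with $\hat R\cong\hat{\bar R}$ having no negative derivations'' is equivalent to ``$R$ itself has no negative derivations'', and a negative derivation of $R$ survives in every graded model. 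Dualizing the conormal sequence of the complete intersection identifies $\Der_K(R) = \ker\!\bigl(J\colon R^n\to R^k\bigr)$, with $J = \bigl(\ol{\p f_j/\p x_i}\bigr)$ the Jacobian acting on column vectors $a=(a_i)\leftrightarrow\delta=\sum_i a_i\p_{x_i}$; the Euler derivation $E = \sum_i w_i x_i\p_{x_i}$ sits in degree $0$ since $Ef_j = d_jf_j\in I$.

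Next I would use that $\Sing R=\{\mm\}$: off the closed point $J$ has constant rank $k$, so $\Der_K(R)$ is a second syzygy, hence reflexive of depth $\ge\min(2,\dim R)=2$, and it is generated there by the Cramer derivations
\[
\theta_S \;=\; \sum_{t=0}^{k}(-1)^t\,\Delta_{S\smallsetminus\{i_t\}}\,\p_{x_{i_t}},\qquad S=\{i_0<\dots<i_k\}\subseteq\{1,\dots,n\},
\]
where $\Delta_T$ is the Jacobian minor on the columns $T$: one has $J\theta_S=0$ because its $j$-th entry is a $(k{+}1)\times(k{+}1)$ determinant with a repeated row, and at a smooth point some $\Delta_T$ is a unit, so the $\theta_S$ span. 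A short computation gives $\deg\theta_S = \sum_j d_j-\sum_{i\in S}w_i$. Hence the submodule $N=\sum_S R\,\theta_S\subseteq\Der_K(R)$ has $N_{<0}=0$ provided the numerical inequality
\[
\sum_{j=1}^{k}d_j \;\ge\; \text{(the sum of the $k{+}1$ largest of the $w_i$)}
\]
holds, while $Q:=\Der_K(R)/N$ has finite length; and $R$ is Gorenstein with $\omega_R\cong R(a)$, $a=\sum_j d_j-\sum_i w_i$, which is what should make $Q$ tractable.

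The proof then splits into two parts: upgrading $N_{<0}=0$ to $\Der_K(R)_{<0}=0$, and verifying the inequality in the two stated ranges. For the first, $0\to N\to\Der_K(R)\to Q\to 0$ gives $\Der_K(R)_{<0}\hookrightarrow Q_{<0}$, and using the depth and reflexivity of $\Der_K(R)$ together with graded local duality against $\omega_R=R(a)$ one should show this map is zero---so that any negative derivation must already lie in $N$, which is $0$. Determining $Q$ precisely---equivalently, that in the relevant degrees $\Der_K(R)$ is generated by $E$ and the $\theta_S$---is the technical heart and the step I expect to be hardest. As for the inequality: when every $f_j$ has order $\ge 3$, a direct estimate gives it---every variable must occur in some $f_j$ by isolatedness, and order $\ge 3$ bounds that $d_j$ from below; when $\embdim R=n\le 5$ one has $k\le 3$, and the only subcase not already covered by Kantor (hypersurfaces) or by Wahl (normal surfaces) is that of codimension-two threefolds in $\AA^5$, which one settles directly by a case analysis on the weights and degrees.

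The plan breaks down precisely where the conjecture does: the numerical inequality \emph{fails} once $\embdim R\ge 6$---then $k{+}1\ge 3$ and the sum of the $k{+}1$ largest weights can exceed $\sum_j d_j$, so the corresponding Cramer derivation $\theta_S$ is itself a negative derivation. So for each $\embdim R\ge 6$ with $\dim R\ge 4$ one instead \emph{constructs} a quasihomogeneous isolated complete intersection---necessarily with some $f_j$ a quadric, as the order-$\ge 3$ case is excluded---carrying such a $\theta_S$; the delicate point is to keep the weights large and the $d_j$ small while still forcing an \emph{isolated} singularity (the relevant minors of $J$ together with $I$ must be $\mm$-primary) and a \emph{normal} ring, after which rigidity of the grading shows that no regrading removes the derivation. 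The outcome is Wahl's conjecture for quasihomogeneous isolated complete intersections of order $\ge 3$ or embedding dimension $\le 5$, and counterexamples in every other case.
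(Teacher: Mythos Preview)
Your overall architecture matches the paper's: invoke Kersken's theorem to reduce $\Theta_A$ to $A\chi+\Theta_A'$, so that negative derivations must already be trivial; then the question becomes numerical (is $\deg\delta_\nu\ge 0$ for all $\nu$?), plus a separate argument that the answer is grading-independent; finally, construct order-$2$ counterexamples in embedding dimension $\ge 6$. Two points, however, are real gaps.

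First, the rigidity claim is too strong. It is not true that ``all $\CC^*$-actions on $\hat R$ are conjugate'' for a quasihomogeneous ICIS: already $xy-z^2$ admits the inequivalent normalized weight systems $(1,1,1)$ and $(1,3,2)$. What the paper proves instead (Proposition~\ref{19}, Corollary~\ref{41}) is weaker but sufficient: provided the trivial derivations lie in $\mm_P^2\Theta_P$---which holds unless $t=1$ and $g_1\notin\mm_P^3$---any two normalized Euler derivations $\chi,\chi'$ satisfy $\chi'=\chi+\chi_+$ with $\chi_+$ raising the $\mm_P$-adic filtration on $\Theta_A$. Hence the adjoint actions of $\chi$ and $\chi'$ on each truncation of $\Theta_A$ are simultaneously triangularizable with the same semisimple part, so the \emph{eigenvalues} on $\Theta_A$ (and the $p_i$) are grading-independent. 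No conjugacy is claimed or needed.

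Second, your ``direct estimate'' for order $\ge 3$ is not a proof, and the paper's argument is genuinely different. Knowing only that each variable occurs in some $f_j$ and that each $f_j$ has order $\ge 3$ does not yield $\sum_j d_j\ge w_1+\cdots+w_{k+1}$: several of the large-weight variables may occur in the \emph{same} $f_j$, and then your bound controls only that one degree. The paper instead argues by contradiction from a negative trivial derivation $\eta=\sum q_i\p_i$. Lemma~\ref{13} is the engine: whenever condition $\fA(k)$ holds (a pure power $x_k^m$ in some $g_j$), a $\chi$-homogeneous coordinate change kills the coefficient $q_k$ while preserving \eqref{6}. Iterating, the process halts at some $k\le t$ where $\fA(k)$ fails and hence $\fB(k)$ holds. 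Only then does the numerics enter: combining Lemma~\ref{12}'s inequality~\eqref{10} with the negative-degree inequality~\eqref{5} forces the exponents $m_k,\dots,m_t$ in $\fB(k)$ to equal $1$, so $g_k,\dots,g_t$ each contain a quadratic monomial $x_kx_{\nu_j}$---contradicting order $\ge 3$. The same machinery (Proposition~\ref{14}) also drives the $n=5$ case and pins down the shape of the counterexamples.
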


In case $R$ is a graded normal locally complete intersection with isolated singularity, $\hat R$ becomes a quasihomogeneous normal isolated complete intersection singularity (ICIS) and Wahl's conjecture can be rephrased as follows (see Lemma~\ref{46} and Remark~\ref{48}).

\begin{cnj*}[Wahl, ICIS case]
Any quasihomogeneous normal ICIS has no negative derivations with respect to some positive grading.
\end{cnj*}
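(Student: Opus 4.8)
The plan is to turn the statement into a computation on the graded module of derivations and then try to annihilate its negative part by exploiting the freedom in the grading. Write $A = S/I$ with $S = K\{x_1,\dots,x_n\}$, $I = (f_1,\dots,f_c)$ a complete intersection defining an isolated singularity, and the $f_j$ quasihomogeneous of degree $d_j$ for positive weights $w_i = \deg x_i$. First I would use the standard graded identification of $\Der_K(A)$ with the quotient of the logarithmic derivations $\{\delta \in \Der_K(S) : \delta(I) \subseteq I\}$ by the trivial submodule $I\,\Der_K(S)$; in this picture the Euler derivation $E = \sum_i w_i x_i \partial_{x_i}$ lives in degree $0$ and satisfies $E(f_j) = d_j f_j$. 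A negative derivation of $A$ is then represented by a lift $\delta = \sum_i a_i \partial_{x_i}$ of some weighted degree $m < 0$, whose coefficients have $\deg a_i = m + w_i$ and whose image under the Jacobian lands in $I$.

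Next I would make the negative-degree condition explicit. For a lift $\delta$ of degree $m$ it reads
\begin{equation*}
\sum_{i=1}^n a_i\,\partial_{x_i} f_j \;=\; \sum_{l=1}^c b_{jl}\,f_l, \qquad \deg b_{jl} = m + d_j - d_l,
\end{equation*}
so that, modulo $I$, the coefficient vector $(\bar a_i)$ is annihilated over $A$ by the Jacobian matrix $(\partial_{x_i} f_j)$. Because $m < 0$, many of the $a_i$ and $b_{jl}$ vanish for purely degree-theoretic reasons, and this is the lever I would use. Reflexivity of $\Der_K(A)$ for normal $A$ lets me test the vanishing in codimension one, while the Koszul relations among the $f_j$ together with isolatedness of the singularity bound the syzygies of the Jacobian from below, so that a putative negative $\delta$ can only occupy a narrow band of low degrees.

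I would then run the two favorable regimes separately. If every $f_j$ has order at least $3$, the quadratic part of the Jacobian vanishes, which forces the degrees $d_j$ up relative to the weights and closes off the low-degree slots where a negative $\delta$ could live; here I expect a clean inductive degree estimate to force all $\bar a_i = 0$ in $A$, uniformly over the positive gradings obtained by rescaling the weights. If instead $\embdim A = n \le 5$, the normality constraint $c \le n-2$ leaves the complete intersection codimension small (and the cases $\dim A \le 2$ are already settled by the results of Kantor and Wahl recalled above), so only finitely many shapes of weight/degree data can support a candidate negative derivation, and I would dispose of these by a direct case analysis fed by the syzygy bounds.

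The hard part, and the precise point where the naive argument must break, is the interaction between the Jacobian syzygies and the choice of grading once the embedding dimension is large. When $n > 5$ the Jacobian acquires enough room to carry a genuine syzygy in negative degree that no admissible regrading can remove, and the degree bookkeeping above collapses. This is exactly the place where one is led to explicit counter-examples rather than a proof, and it signals that the hypotheses ``order $\ge 3$'' or ``$\embdim A \le 5$'' are not cosmetic but mark the true boundary of validity of the statement.
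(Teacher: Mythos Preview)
The statement you were asked to prove is a \emph{conjecture}, and the paper does not prove it---it \emph{refutes} it. Example~\ref{22} exhibits, for every embedding dimension $n\ge 6$, a quasihomogeneous normal ICIS carrying a negative derivation with respect to \emph{every} positive grading. You seem to sense this in your final paragraph, but then your document is not a proof proposal for the stated conjecture; it is a sketch of the partial positive results (order $\ge 3$, or $n\le 5$) together with a gesture toward the counter-examples. That is the correct overall picture, but the sketch has real gaps.

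For the positive results your plan stays at the level of ``degree bookkeeping on Jacobian syzygies'' and never invokes the structural input that makes the argument go through: Kersken's theorem (Theorem~\ref{17}) says $\Theta_A$ is generated by the Euler field and the determinantal derivations $\delta_\nu$ of \eqref{3}. This is what lets the paper reduce a hypothetical negative derivation to a single $\delta_\nu$ with $\nu=(1,\dots,t+1)$, extract the inequality \eqref{5}, and then run the $\fA(k)/\fB(k)$ dichotomy (Lemma~\ref{13}, Proposition~\ref{14}) to force some $g_j\notin\mm_P^3$. Your ``clean inductive degree estimate'' for the order $\ge 3$ case and your ``case analysis fed by syzygy bounds'' for $n\le 5$ are placeholders for exactly this machinery; without Kersken's generators you have no handle on an arbitrary logarithmic derivation.

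The more serious gap is on the negative side. Wahl's conjecture allows \emph{any} positive grading, so a single example with a negative derivation for one grading proves nothing; you must show the negative derivation survives \emph{every} regrading. You assert ``no admissible regrading can remove'' it, but give no mechanism. The paper handles this with Proposition~\ref{19} and Corollary~\ref{41}: using Kersken's decomposition one shows that two Euler fields differ only by something in $\mm_P^2\Theta_P$, hence act with the same eigenvalues on the associated graded of $\Theta_A$, so $\Theta_{A,<0}=0$ is grading-independent. Your remark about ``rescaling the weights'' misses the point entirely---rescaling is a trivial regrading; the content is invariance under genuinely different Euler fields.
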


For quasihomogeneous normal ICIS, there is an explicit description of all derivations due to Kersken~\cite{Ker84}.
Based on this description, we prove our main

\begin{thm}\label{0}
For any quasihomogeneous normal ICIS of order at least $3$ there are no negative derivations with respect to any positive grading.
\end{thm}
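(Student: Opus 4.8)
The plan is to combine Kersken's explicit description of $\Der_K(A)$ with an analysis of the monomials that the isolated-singularity hypothesis forces on the defining equations. Write $A=P/I$ with $P=K\{x_1,\dots,x_n\}$ and $I=\ideal{f_1,\dots,f_k}$ generated by a regular sequence of quasihomogeneous elements of weighted degrees $d_1,\dots,d_k$ for positive weights $w_1,\dots,w_n$; put $|d|=\sum_jd_j$ and $w_T=\sum_{i\in T}w_i$ for $T\subseteq\{1,\dots,n\}$. By Kersken's description, $\Der_K(A)=\ker\bigl(A^n\xrightarrow{J}A^k\bigr)$, $J=(\p_{x_i}f_j)$, is generated as an $A$-module by the Euler derivation $E=\sum_iw_ix_i\p_{x_i}$, of weighted degree $0$, together with the Koszul derivations
\[
 \delta_S=\sum_{l=0}^k(-1)^l\,M_{S\setminus\{i_l\}}\,\p_{x_{i_l}},\qquad S=\{i_0<\dots<i_k\},
\]
where $M_T=\det\bigl((\p_{x_i}f_j)_{j;\,i\in T}\bigr)$ is the Jacobian minor on the columns $T$. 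A Laplace expansion gives $\delta_S(I)=0$, so $\delta_S\in\Der_K(A)$, and a direct computation gives $\deg\delta_S=|d|-w_S$. Since $A$ is non-negatively graded and $\deg E=0$, the module $\Der_K(A)$ has no element of negative degree once $\deg\delta_S\ge 0$ for every $S$; and $\deg\delta_S$ is smallest when $S$ is the set of the $k+1$ largest weights, so it suffices to prove, for that $S$, the inequality $(\star)$\,: $|d|\ge w_S$.

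To prove $(\star)$ we exploit that $A$ is a complete intersection, so $\Sing A=V\bigl(I+I_k(J)\bigr)$; isolatedness means $\rk J=k$ on $V(I)\setminus\{0\}$, and under the order hypothesis $I\subseteq\mm^3$ this pins down the monomials of the $f_j$. First, every $f_j$ must involve at least $n-k+1=\dim A+1$ variables: if $f_j$ involved only variables in a set $V$ with $|V|\le n-k$, then, since $f_j\in\mm^2$, the $j$-th row of $J$ would vanish identically on the subspace $\{x_i=0:i\in V\}$, whose intersection with $V(I)$ is cut out there by the $\le k-1$ remaining restricted equations and is therefore positive-dimensional, contradicting $\Sing A=\{0\}$. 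Next, fix a variable $x_i$ and restrict to the coordinate line $\ell_i$: either some $f_j$ contains a pure power $x_i^{a}$, whence $a\ge 3$ and $d_j=aw_i\ge 3w_i$; or $\ell_i\subseteq V(I)$, and then $\rk J=k$ along $\ell_i\setminus\{0\}$ forces, for every $j=1,\dots,k$, a monomial $x_{\sigma_j}x_i^{b_j}$ of $f_j$ with the $\sigma_j$ distinct and different from $i$, where $b_j\ge 2$ by order $\ge 3$, so $d_j=w_{\sigma_j}+b_jw_i\ge 2w_i$ for every $j$, and in particular $|d|\ge 2k\,w_i$. The analogous analysis on a general coordinate subspace $L_T$ shows that any monomial responsible for the non-vanishing of a $k\times k$ minor of $J$ on $L_T$ has ambient order $\ge 3$, again bounding the relevant $d_j$ below by roughly twice a weight.

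Now combine these estimates over the $k+1$ variables of largest weight, indexed by $S$. If some $x_i$ with $i\in S$ is of the ``pointed'' type, then $|d|\ge 2k\,w_i$; taking $i$ of largest weight yields $|d|\ge 2k\,w_n\ge (k+1)w_n\ge w_S$, and a refinement of $d_j=w_{\sigma_j}+b_jw_i$ (the $\sigma_j$ being distinct) handles the other $i\in S$. Otherwise every $x_i$ ($i\in S$) is of the ``pure power'' type; since there are only $k$ equations, two of the $k+1$ pure powers $x_i^{a_i}$ sit in the same $f_j$, and tracking the relations $d_j=a_iw_i$ (all $a_i\ge 3$), together with the first observation — that an equation containing no large pure power must still involve a large-weight variable and hence has degree at least a big weight plus $2$ — gives $\sum_jd_j\ge w_S$. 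This proves $(\star)$, and with it Theorem~\ref{0}.

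The main obstacle is precisely this passage from the qualitative hypothesis $\Sing A=\{0\}$ to the quantitative monomial bounds and their combinatorial assembly into $(\star)$: one must argue with all coordinate subspaces, not only the axes, propagate the order-$\ge 3$ condition carefully through each restriction, and dispose of the delicate configurations in which many large-weight variables are covered by a single equation or by mutually pointed monomials. That order exactly $2$ defeats these estimates — a pure square $x_i^2$ costs only $2w_i$, a pointed pair $x_ix_j$ even less — is exactly the mechanism behind the counter-examples of the final section. A secondary and more routine point is to check that Kersken's generators of $\Der_K(A)$ are indeed just $E$ and the $\delta_S$ in the dimension range at hand.
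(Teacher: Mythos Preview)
Your reduction via Kersken's theorem to the numerical inequality $(\star)$ is correct and is exactly how the paper begins (Lemma~\ref{54}). The gap is in your proof of $(\star)$ itself. The case analysis you sketch is not carried out: the phrases ``a refinement of $d_j=w_{\sigma_j}+b_jw_i$ handles the other $i\in S$'' and ``tracking the relations\dots gives $\sum_jd_j\ge w_S$'' are precisely where the work lies, and your argument breaks down as soon as the variable of largest weight is of pure-power type while a smaller-weight variable is pointed, or when several pure powers cluster in one equation and the remaining equations are only weakly constrained by your ``first observation''. A direct combinatorial proof of $(\star)$ along your lines, if it exists, would need substantially more than what you have written.

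The paper circumvents this combinatorics by a different mechanism. Assuming a negative derivation exists, one has the minimal $\eta=\delta_{1,\dots,t+1}$ of Lemma~\ref{54} and the inequality~\eqref{5}. The key step (Lemma~\ref{13}) is a $\chi$-homogeneous coordinate change: whenever $\fA(k)$ holds, one can absorb the $\p_k$-coefficient of $\eta$ into the remaining ones, so after iterating one reaches a $k\le t$ at which $\fA(k)$ fails and hence $\fB(k)$ holds. Now Lemma~\ref{12} supplies the \emph{opposite} inequality $p_1+\cdots+p_{k-1}\ge w_1+\cdots+w_{k-1}+(k-1)$, and subtracting this from~\eqref{5} gives~\eqref{11}; a short computation with the $\fB(k)$ monomials then forces $m_k=\cdots=m_t=1$, i.e.\ $g_k,\dots,g_t\notin\mm_P^3$, contradicting order $\ge 3$. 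This coordinate-change reduction is the idea you are missing. Finally, note that ``with respect to \emph{any} positive grading'' is a separate issue you do not address; the paper handles it via Proposition~\ref{19} and Corollary~\ref{41}, using that under the order-$\ge 3$ hypothesis the trivial derivations lie in $\mm_P^2\Theta_P$.
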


\begin{proof}
This follows from Corollary~\ref{41} and Proposition \ref{14}.
\end{proof}

Our investigations lead to a family of counter-examples to Wahl's Conjecture.
In order to describe it, we first fix our notation.
A quasihomogeneous singularity can be represented as
\begin{equation}\label{24}
A=P/\aa,\quad \aa=\ideal{g_1,\dots,g_t}\unlhd K\llangle x_1,\dots,x_n\rrangle=:P
\end{equation}
where $g_1,\dots,g_t$ are homogeneous polynomials of degree $p_i:=\deg(g_i)$ with respect to weights $w_1,\dots,w_n\in\ZZ_+$ on the variables $x_1,\dots,x_n$ (see \S\ref{27}).
We order these weights and degrees decreasingly as
\begin{gather}\label{1}
w_1\ge\dots\ge w_n>0,\\
\nonumber p_1\ge\dots\ge p_t.
\end{gather}

\begin{exa}\label{22}
Let $n\ge6$ and pick $c_7,\dots,c_n\in K\setminus\{1\}$ pairwise different such that $c_i^9+1\neq0$ for all $i$.
Assigning weights $8,8,5,2,\dots,2$ to the variables $x_1,\dots,x_n$, the equations
\begin{align}\label{20}
g_1:=&x_1x_4+x_2x_5+x_3^2-x_4^5+\sum_{i=7}^nx_i^5\\
\nonumber g_2:=&x_1x_5+x_2x_6+x_3^2+x_6^5+\sum_{i=7}^nc_ix_i^5
\end{align}
define a quasihomogeneous complete intersection $A$ as in \eqref{24} with isolated singularity.
On $A$ there is a derivation
\begin{equation}\label{21}
\eta:=
\begin{vmatrix}
\p_1 & \p_2 & \p_3 \\
x_4 & x_5 & 2x_3 \\
x_5 & x_6 & 2x_3 
\end{vmatrix}
=2x_3(x_5-x_6)\p_1-2x_3(x_4-x_5)\p_2+(x_4x_6-x_5^2)\p_3
\end{equation}
of degree $-1$.
We work out the details of this example in \S\ref{18}.
\end{exa}

We show that Example~\ref{22} gives a counter-example to the ICIS case of Wahl's conjecture of minimal embedding dimension $n=6$.

\begin{thm}
Exactly up to embedding dimension $5$, all quasihomogeneous ICIS have no negative derivations with respect to some positive grading.
\end{thm}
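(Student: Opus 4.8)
The theorem combines two assertions: \textbf{(A)} every quasihomogeneous ICIS of embedding dimension at most $5$ has no negative derivations with respect to some positive grading, confirming Wahl's conjecture there; and \textbf{(B)} for every embedding dimension $n\ge6$ there is a quasihomogeneous normal ICIS which has negative derivations with respect to \emph{every} positive grading. I would treat the two halves separately.

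For \textbf{(A)} I would first reduce to a single residual case. Complete intersections are Cohen--Macaulay, so an ICIS of dimension $\ge2$ is automatically normal by Serre's criterion; a non-normal ICIS therefore has dimension $\le1$, hence is a curve or a zero-dimensional germ, and is settled by Kantor's theorem and a direct computation respectively. In the normal case, hypersurfaces are Kantor's, surfaces Wahl's, the case of order $\ge3$ is Theorem~\ref{0}, and order $1$ cannot occur once the embedding dimension is minimal, since a linear term lets one eliminate a variable. Writing $d$ for the dimension and $t$ for the codimension, what is left is $d\ge3$, $t\ge2$, $d+t=n\le5$, i.e. $n=5$, $t=2$, $d=3$, with $\aa=\ideal{g_1,g_2}$ of order exactly $2$. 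For this case I would invoke Kersken's explicit description of $\Der(A)$: after normalizing $g_1$ so that $\mathrm{ord}(g_1)=2$ with quadratic part of maximal rank, enumerate the finitely many weight systems $(w_1,\dots,w_5;p_1,p_2)$ compatible with quasihomogeneity, order~$2$ and an isolated singularity, and for each either exhibit a positive re-grading for which the determinantal derivations built from the Jacobian matrix of $(g_1,g_2)$ — the analogues of $\eta$ in Example~\ref{22} — become non-negative, reading off from Kersken's formula that nothing else can be negative, or else show directly that no negative derivation exists.

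For \textbf{(B)} I would use Example~\ref{22}: it is stated for $n=6$, and for $n>6$ the same formulas with the displayed sums $\sum_{i=7}^n$ already furnish the family member in embedding dimension $n$. One checks (the content of \S\ref{18}) that $g_1,g_2$ form a regular sequence of weighted degree $10$ and that the Jacobian of $(g_1,g_2)$ has rank $2$ away from the origin — here the hypotheses that the $c_i$ are pairwise distinct, different from $1$, and satisfy $c_i^9+1\ne0$ enter — so $A$ is a complete intersection with isolated singularity, hence normal by Serre's criterion since it is Cohen--Macaulay of dimension $n-2\ge2$ and regular in codimension~$1$. Since $\eta$ is the $3\times3$ determinant with first row $\p_1,\p_2,\p_3$ and lower rows the partials of $g_1,g_2$ in $x_1,x_2,x_3$, expanding $\eta(g_k)$ as a determinant with a repeated row gives $\eta(g_k)=0$, so $\eta$ descends to a $K$-derivation of $A$; it is nonzero — its $\p_3$-coefficient $x_4x_6-x_5^2$ does not lie in $\aa$ — and homogeneous of degree $p_1+p_2-(w_1+w_2+w_3)=20-21=-1$. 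Thus $A$ carries a negative derivation for the given grading; and to see that $\wh A$ has negative derivations with respect to \emph{every} positive grading — which is what makes it a counter-example to the ICIS form of Wahl's conjecture — I would show, again through Kersken's description, that each positive grading on $\wh A$ carries a determinantal derivation of this shape of negative degree, using that the weight system of a quasihomogeneous ICIS with isolated singularity is rigid enough to force the relevant inequality.

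The main obstacle in \textbf{(A)} is the residual order-$2$, codimension-$2$ case: one must push Kersken's description far enough to control the ``exotic'' summand of $\Der(A)$ and guarantee that, after the re-grading, \emph{no} derivation at all — not merely the determinantal ones — has negative degree; the weight enumeration is finite but delicate. The main obstacle in \textbf{(B)} is the passage from ``some'' to ``every'' positive grading, i.e. pinning down all positive gradings on the analytic germ $\wh A$ and checking that a negative derivation survives for each.
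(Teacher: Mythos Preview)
Your decomposition is sound and you have correctly isolated the two obstacles, but in each case the paper has a specific tool that dissolves the obstacle rather than confronting it head-on as you propose.

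For \textbf{(B)}, the missing ingredient is Corollary~\ref{41}: for a quasihomogeneous normal ICIS which is not an order-$2$ hypersurface, the condition $\Theta_{A,<0}=0$ is \emph{independent of the choice of positive grading}. This is proved in Proposition~\ref{19} by comparing two Euler derivations through their adjoint action on the $\mm_P$-adic filtration of $\Theta_A$; Kersken's theorem enters only to supply the hypothesis \eqref{43}--\eqref{44} that every derivation lifts to $c\chi$ plus something in $\mm_P^2\Theta_P$. With grading-independence in hand, exhibiting a single negative derivation for a single grading, as Example~\ref{22} does, already finishes the job---there is no need to classify the positive gradings on $\hat A$ or to argue any rigidity of the weight system.

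For \textbf{(A)}, the residual case $n=5$, $t=2$, $d=3$ is handled directly by Proposition~\ref{38}, uniformly in the order and without enumerating weight systems. The mechanism is Proposition~\ref{14}: if $\Theta_{A,<0}\ne0$, a minimal negative trivial derivation gives the inequality \eqref{5}, and iterated use of Lemma~\ref{13} forces condition $\fB(k)$ for some $k\le t$. Lemma~\ref{33} then excludes the mixed cases $\fA(1)\wedge\fB(2)$ and $\fB(1)\wedge\fA(2)$, leaving $\fB(1)\wedge\fB(2)$; this pins down the shape of $g_1,g_2$ enough that $J_A$ is visibly not $\mm_A$-primary modulo $\ideal{x_3,x_4,x_5}$, contradicting the isolated singularity hypothesis. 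Your detour via Theorem~\ref{0} for order $\ge3$ followed by a weight-system enumeration for order $2$ is therefore unnecessary---Theorem~\ref{0} is not invoked in the paper's proof of this statement at all---and the enumeration program as you state it is not obviously finite.
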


\begin{proof}
This follows from Kantor~\cite{Kan79}, \cite[Thm.~2.4, Prop.~2.8]{Wah81}, Proposition~\ref{38}, Example~\ref{22} and Corollary~\ref{41}.
\end{proof}

As a consequence of our arguments we obtain a simple special case of the following conjecture due to S.~Halperin.

\begin{cnj*}[Halperin]
On any graded zero-dimensional complete intersection there are no negative derivations.
\end{cnj*}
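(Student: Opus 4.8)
The plan is to extract from our arguments the special case they actually deliver, namely that a graded zero-dimensional complete intersection of order at least $3$ carries no negative derivations, and then to pinpoint why the order $\le2$ range — which is the real content of Halperin's conjecture — escapes the present estimates.

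First I would observe that a graded zero-dimensional complete intersection $A=P/\aa$, $\aa=\ideal{g_1,\dots,g_n}$ with $g_1,\dots,g_n$ a regular sequence, is a quasihomogeneous complete intersection with isolated singularity for the trivial reason that $A$ is Artinian, so $\Spec A$ is a single fat point. Dualizing the conormal sequence of the regular sequence yields, as for any complete intersection, a left-exact sequence
\begin{equation*}
0\to\Der_K(A)\to A^n\xrightarrow{\,(\p_ig_j)\,}A^n,
\end{equation*}
so that a derivation of weighted degree $d$ is a tuple $(a_1,\dots,a_n)$ with $a_i\in A_{w_i+d}$ and $\sum_i a_i\,\p_ig_j\in\aa$ for every $j$. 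This description rests only on $\aa$ being generated by a regular sequence and is blind to both normality and the dimension of $A$.

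Next I would invoke the degree estimate underlying Proposition~\ref{14} (which proves Theorem~\ref{0}). Its mechanism is that, once every $g_j$ has order at least $3$, each partial $\p_ig_j$ has order at least $2$, and the low target degrees $w_i+d<w_i$ of a putative negative derivation leave no room to realize the required membership $\sum_i a_i\,\p_ig_j\in\aa$ unless all $a_i$ vanish. The point I would verify is that this argument appeals neither to Kersken's description of derivations nor to normality, but only to the complete-intersection structure and the order bound; both are in force here, so it transfers verbatim to the Artinian setting and yields the asserted special case of Halperin's conjecture.

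The hard part — and the reason only the order $\ge3$ case follows — is that this estimate collapses at order $2$. There the partials $\p_ig_j$ include linear forms, and Jacobian (Wronskian) determinants assembled from them, precisely the mechanism producing the derivation $\eta$ of Example~\ref{22}, may carry negative weighted degree. In the zero-dimensional case one still expects every such candidate to vanish modulo $\aa$, as Halperin predicts, but certifying this seems to demand the Gorenstein socle symmetry — with $\det(\p_ig_j)$ spanning the socle of $A$ — rather than a degree count. Converting that symmetry into a vanishing statement uniform in the weights and valid at order $2$ is exactly what the present methods do not achieve, and what keeps the general conjecture open.
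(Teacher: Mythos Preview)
The statement is a \emph{conjecture}; the paper does not prove it. What the paper does prove, in the Proposition immediately following, is the special case $p_1=p_n$ (all defining equations of equal weighted degree), and its argument is quite different from the one you sketch: since $A$ is zero-dimensional, condition $\fA(k)$ holds for \emph{every} $k$ (else the $x_k$-axis sits in $V(\aa)$); then for any $\eta$ of degree $<p_n-p_1$ one has $\eta g_j=0$ for all $j$ by Remark~\ref{30}, and repeated application of Lemma~\ref{13} kills every coefficient $q_k$, forcing $\eta=0$. When $p_1=p_n$ this covers all negative degrees.

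You instead claim the order $\ge 3$ case, by transporting Proposition~\ref{14}. This does not work. The assertion that the argument ``appeals neither to Kersken's description of derivations nor to normality'' is incorrect: the proof of Proposition~\ref{14} starts from Lemma~\ref{54}, which uses Theorem~\ref{17} (Kersken) to write the minimal negative derivation as a trivial $\delta_\nu$ and to obtain inequality~\eqref{5}, $p_1+\cdots+p_t<w_1+\cdots+w_{t+1}$. In dimension zero $t=n$, so there is no $w_{t+1}$ and \eqref{5} is simply unavailable; without it the chain \eqref{11}--\eqref{35} leading to $g_j\notin\mm_P^3$ collapses. Your replacement heuristic --- that order $\ge 3$ forces $\partial_i g_j\in\mm_P^2$ and ``leaves no room'' for $\sum a_i\partial_i g_j\in\aa$ --- is not an argument: the condition is a membership in $\aa$, not a vanishing, and there is no degree obstruction to it once the $p_j$ are spread out.

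So the gap is twofold: you identify a special case the paper does not establish, and the mechanism you borrow for it genuinely depends on the normality hypothesis you claim to discard.
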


The following result bounds the degree of negative derivations (see also \cite[Prop.]{Ale91}).
The bound does not require a complete intersection hypothesis and it is independent of further hypotheses as for instance in \cite[Thm.~2]{Hau02}.

\begin{prp}
For any quasihomogeneous zero-dimensional singularity $A$ as in \eqref{24} there are no derivations of degree strictly less than $p_n-p_1$.
In particular, Halperin's conjecture holds true if $p_1=p_n$.
\end{prp}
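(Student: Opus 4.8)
The plan is to lift a hypothetical negative-degree derivation to the power series ring $P$, to show by a weight count that it must annihilate each $g_j$ identically, and then to obtain a contradiction from the fact that $g_1,\dots,g_t$ cut out an Artinian — hence finite — map.

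Concretely, I would let $\delta$ be a nonzero homogeneous $K$-derivation of $A$ of weighted degree $d$ and aim to show $d\ge p_t-p_1$, the least of the generator degrees minus the greatest (for a complete intersection $t=n$, so this is $p_n-p_1$). First I lift $\delta$: choosing homogeneous preimages $a_i\in P_{w_i+d}$ of $\delta(x_i)\in A_{w_i+d}$ and setting $D:=\sum_ia_i\p_i\in\Der_K(P)$ gives a homogeneous lift of degree $d$, with $D\ne0$ since $\delta\ne0$. Because $\overline{g_j}=0$ in $A$, the Leibniz rule gives $0=\delta(\overline{g_j})=\overline{\sum_i(\p_ig_j)\,a_i}=\overline{D(g_j)}$, hence $D(g_j)\in\aa_{p_j+d}=\sum_kP_{p_j+d-p_k}\,g_k$ for every $j$.

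The decisive step is a degree count: if $d<p_t-p_1$, then $p_j+d-p_k\le p_1+d-p_t<0$ for all $j,k$, so $P_{p_j+d-p_k}=0$, whence $\aa_{p_j+d}=0$ and $D(g_j)=0$ for every $j$. Therefore $D$ kills the complete local subring $S:=K\llangle g_1,\dots,g_t\rrangle\subseteq P$, i.e.\ $D\in\Der_S(P)$. But $A=P/(g_1,\dots,g_t)P=P/\mm_S P$ is a finite $K$-module, so $P$ is a finite $S$-module by Nakayama; since $P$ and $S$ are domains, $L:=\operatorname{Frac}(P)$ is then finite over $F:=\operatorname{Frac}(S)$, and separable because $\operatorname{char}K=0$, so $\Der_F(L,L)=0$. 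As $D$ extends through the quotient rule to an $F$-derivation of $L$, I conclude $D=0$, a contradiction. This gives the bound, and for $p_1=p_t$ (all $g_j$ of the same degree, in particular such a complete intersection) it yields $d\ge0$, which is Halperin's conjecture in that case.

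The routine ingredients are the existence of the homogeneous lift $D$ and the identity $\overline{D(g_j)}=\delta(\overline{g_j})$. The one delicate point — and the main obstacle — is the module-finiteness of $P$ over $S$ in the convergent power series category; this is the standard fact that a morphism of analytic local rings with Artinian closed fibre is finite. In the complete intersection case one can bypass it: $g_1,\dots,g_n$ then form a system of parameters, the Jacobian $\det(\p_ig_j)$ is a nonzero element of the domain $P$ (it represents the socle of $A$), so the matrix $(\p_ig_j)$ is injective on $P^n$ and $D(g_j)=0$ for all $j$ forces $(a_1,\dots,a_n)=0$, i.e.\ $D=0$.
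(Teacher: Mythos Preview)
Your argument is correct and actually proves the slightly stronger bound $d\ge p_t-p_1$; the reduction to $D(g_j)=0$ via the degree count is exactly Remark~\ref{30} in the paper. Where you diverge is in deducing $D=0$ from $D(g_j)=0$: the paper instead observes that zero-dimensionality forces condition $\fA(k)$ for every $k$ (some $g_j$ contains a pure power $x_k^m$), and then iterates the coordinate-change Lemma~\ref{13} to kill the coefficients $q_1,q_2,\dots$ of $\eta$ one by one. Your finiteness/separability route is more conceptual and avoids these explicit manipulations; the Jacobian variant you give for the complete intersection case is particularly clean and self-contained. The paper's approach, by contrast, is entirely elementary and reuses Lemma~\ref{13}, which is the workhorse for its main results on normal ICIS; it also sidesteps the analytic-finiteness fact you flag as delicate.
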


\begin{proof}
As $A$ is assumed to be zero-dimensional, condition $\fA(k)$ on page \pageref{32} must hold true for all $k=1,\dots,n$.
Then the claim follows from Remark~\ref{30} and Lemma~\ref{13}.
\end{proof}

\subsubsection*{Acknowledgments}

The second author would like to thank the LAREMA at the University of Angers 
for providing financial support and a pleasant working atmosphere during his research visit in February 2014.

\section{Graded analytic algebras}\label{27}

Consider a (local) analytic algebra $A=(A,\mm_A)$ over a (possibly trivially) valued field $K$ of characteristic zero.
We assume in addition that $A$ is non-regular and can be represented as a quotient $A=P/\aa$ of a convergent power series ring $P:=K\llangle x_1,\dots,x_n\rrangle\unrhd\aa$.
In the sequel such an $A$ will be referred to as a \emph{singularity}.
We choose $n$ minimal such that $n=\embdim A$ and set $d:=\dim A$.
 
A $K_+$-grading on $A$ is given by a \emph{diagonalizable derivation} $\chi\in\Der_KA=:\Theta_A$ which means that $\mm_A$ is generated by eigenvectors $x_1,\dots,x_n$ (see \cite[(2.2),(2.3)]{SW73}).
Such a derivation is also called an \emph{Euler derivation}.
We refer to $w_1,\dots,w_n$ defined by $w_i:=\chi(x_i)/x_i$ as the \emph{eigenvalues of $\chi$}.
More generally, we call $\chi$-eigenvectors $f\in A$ \mbox{\emph{($\chi$-)homogeneous}} and define their \mbox{\emph{($\chi$-)degree}} to be the corresponding eigenvalue denoted by $\deg(f):=\chi(f)/f\in k$.
We denote by $A_a$ the $K$-vector space of all such eigenvector $f\in A$ with $\deg(f)=a$.
This defines a $K$-subalgebra
\begin{equation}\label{50}
\bar A:=\bigoplus_{a\in K}A_a\subset A\subset\hat A.
\end{equation}

The derivation $\chi\in\Theta_A$ lifts to $\chi\in\Theta_P:=\Der_KP$ (see \cite[(2.1)]{SW73}).
In particular, $P$ is $K_+$-graded and $\aa\unlhd P$ is a $\chi$-invariant ideal and hence ($\chi$-)homogeneous (see \cite[(2.4)]{SW73}).
Pick homogeneous $g_1,\dots,g_t\in\aa$ inducing a $K$-vector space basis of $\aa/\mm_A\aa$.
Then $\aa=\ideal{g_1,\dots,g_t}$ by Nakayama's Lemma.
We set $p_i:=\deg(g_i)$ ordered as in \eqref{1}.
To summarize, we can write $A$ as in \eqref{24}.

A $K_+$-grading is called a positive grading if $w_i\in\ZZ_+$ for all $i=1,\dots,n$ (see \cite[\S3, Def.]{SW73}). 
We call $A$ \emph{quasihomogeneous} if it admits a positive grading.
In this case, we shall always normalize $\chi$ to make the $w_i$ coprime and order the variables according to \eqref{1}.
Positivity of weights enforces $g_i\in\bar P=K[x_1,\dots,x_n]$ and that
\begin{equation}\label{36}
\bar A=\bigoplus_{i\ge0}A_i=\bar P/\bar\aa,\quad\bar\aa=\ideal{g_1,\dots,g_t}\unlhd K[x_1,\dots,x_n]=\bar P,
\end{equation}
is a (positively) graded-local $k$-algebra with completion
\begin{equation}\label{47}
\hat{\bar A}=\hat A
\end{equation}
and graded maximal ideal $\mm_{\bar A}=\bar\mm_A:=\bigoplus_{i>0}A_i$.
The preceding discussion enables us to reformulate Wahl's Conjecture in the language of Scheja and Wiebe as follows.

\begin{lem}\label{46}
The following supplementary structures on a singularity $A$ are equivalent:
\begin{enumerate}
\item\label{46a} an Euler derivation $\chi$ on $A$ with positive eigenvalues,
\item\label{46b} a positive grading on $A$,
\item\label{46c} a positive grading on $\hat A$,
\item\label{46d} a (positively) graded $K$-algebra $\bar A$ such that $\hat{\bar A}=\hat A$.
\end{enumerate}
\end{lem}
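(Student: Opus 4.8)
The plan is to read $(1)\Leftrightarrow(2)$ off the definitions, to obtain $(2)\Rightarrow(3)$ and $(3)\Leftrightarrow(4)$ by purely formal manipulations, and to isolate $(3)\Rightarrow(2)$ as the single implication requiring genuine input, namely descent from the completion back to the analytic algebra. First, $(1)\Leftrightarrow(2)$ holds essentially by fiat: according to the definitions recalled at the start of \S\ref{27}, a positive grading on $A$ is by definition a $K_+$-grading, i.e.\ a diagonalizable (= Euler) derivation $\chi\in\Theta_A$, subject only to the extra requirement that the eigenvalues $w_i=\chi(x_i)/x_i$ lie in $\ZZ_+$, i.e.\ are positive. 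For $(2)\Rightarrow(3)$: lift $\chi$ to an Euler derivation on $P$ with the same eigenvalues by \cite[(2.1)]{SW73}, so that $\aa$ becomes $\chi$-homogeneous by \cite[(2.4)]{SW73}; since $\chi$ has degree $0$ it preserves the $\mm_P$-adic filtration and extends by continuity to $\hat P$, while $\hat\aa=\aa\hat P=\ideal{g_1,\dots,g_t}$ remains homogeneous, so $\chi$ descends to an Euler derivation on $\hat A=\hat P/\hat\aa$ with positive eigenvalues.

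For $(3)\Rightarrow(4)$: given a positive grading on $\hat A$ with Euler derivation $\chi$, positivity of the $w_i$ squeezes each homogeneous component between powers of $\mm_{\hat A}$ (a degree-$i$ element lies in $\mm_{\hat A}^{\lceil i/w_1\rceil}$ and is already detected modulo $\mm_{\hat A}^{\lfloor i/w_n\rfloor+1}$), so each $(\hat A)_i$ is finite-dimensional and $\hat A=\prod_{i\ge0}(\hat A)_i$ as a topological $K$-module; then $\bar A:=\bigoplus_{i\ge0}(\hat A)_i\subseteq\hat A$ is a positively graded, graded-local $K$-algebra, finitely generated over $K$ since $\mm_{\bar A}/\mm_{\bar A}^2=\mm_{\hat A}/\mm_{\hat A}^2$ is finite-dimensional, with $\mm_{\bar A}$-adic completion $\hat{\bar A}=\hat A$. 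Conversely, for $(4)\Rightarrow(3)$, a positively graded (graded-local) $K$-algebra $\bar A$ carries its tautological Euler derivation $\bar\chi$ given by multiplication by the degree on homogeneous elements; being of degree $0$ it preserves the $\mm_{\bar A}$-adic filtration and extends by continuity to $\hat{\bar A}=\hat A$, and lifting homogeneous generators of $\mm_{\bar A}$ shows the extension is again an Euler derivation with positive eigenvalues.

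It remains to prove $(3)\Rightarrow(2)$, which I expect to be the main obstacle since it is the only non-formal step. Write $\hat A=\hat P/\hat\aa$ with $\hat P=K[[x_1,\dots,x_n]]$, $n=\embdim A$; using $(3)\Rightarrow(4)$ choose the homogeneous model $\bar A=K[x_1,\dots,x_n]/\bar\aa\subseteq\hat A$ with homogeneous variables $x_i$ and $\bar\aa=\ideal{g_1,\dots,g_t}$ generated by homogeneous polynomials that also generate $\hat\aa$. Setting $P:=K\llangle x_1,\dots,x_n\rrangle$ and $\aa:=\ideal{g_1,\dots,g_t}\unlhd P$, faithful flatness of $\hat P$ over $P$ gives $\aa=\hat\aa\cap P$, so $\aa$ is homogeneous, the Euler derivation $\sum_iw_ix_i\p_i$ of $P$ descends to $A':=P/\aa$, and $\hat{A'}=\hat A$. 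The remaining — and genuinely substantive — point is that this analytic model $A'$ is isomorphic to the given singularity $A$, so that the positive grading can be transported to $A$; this is the formal-to-analytic descent. I would deduce it from Artin approximation (an analytic algebra is determined up to isomorphism by its completion), noting that concretely the obstruction disappears because the positive eigenvalues $w_i>0$ place the linear part of the Euler derivation in the Poincaré domain, so any formal diagonalization of it already converges. Everything else is bookkeeping with the definitions and with \eqref{36} and \eqref{47}.
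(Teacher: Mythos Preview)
Your argument is correct and structurally close to the paper's. The paper is terser: it cites Scheja--Wiebe (\cite[(2.2),(2.3)]{SW73} and \cite[(1.6)]{SW77}) for the full equivalence of \eqref{46a}, \eqref{46b}, \eqref{46c}, handles $(4)\Rightarrow(3)$ exactly as you do by extending the Euler derivation on $\bar A$ to the completion, and gets the remaining direction as $(2)\Rightarrow(4)$ from the explicit description \eqref{50}, \eqref{36}, \eqref{47} of $\bar A$ sitting inside $A$ (rather than inside $\hat A$, as you do for $(3)\Rightarrow(4)$). The one place you genuinely add content is $(3)\Rightarrow(2)$: the paper black-boxes this as \cite[(1.6)]{SW77}, whereas you unpack it via Artin approximation (two analytic algebras with isomorphic completions are isomorphic), which is correct and is the standard modern way to see it; your Poincar\'e-domain remark is essentially the direct convergence argument Scheja--Wiebe themselves use. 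So the difference is only that you prove what the paper cites.
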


\begin{proof}
The equivalences of \eqref{46a}, \eqref{46b}, and \eqref{46c} are due to Scheja and Wiebe (see \cite[(2.2),(2.3)]{SW73} and \cite[(1.6)]{SW77}).
For the equivalence with \eqref{46d}, note that the obvious Euler derivation on a graded $K$-algebra $\bar A$ lifts to an Euler derivation on the completion $\hat{\bar A}=\hat A$.
The converse follows from from \eqref{50}, \eqref{36} and \eqref{47}.
\end{proof}

Let us assume now that $A$ is an isolated complete intersection singularity (ICIS).
We may then take $g_1,\dots,g_t$ to be a regular sequence and $d+t=n$.
The isolated singularity hypothesis can be expressed in terms of the Jacobian ideal
\begin{equation}\label{25}
J_A:=\ideal{\left\vert\frac{\p g}{\p x_\nu}\right\vert\mid |\nu|=t}\unlhd A
\end{equation}
of $A$ as follows.

\begin{prp}\label{37}
A complete intersection singularity $A$ is isolated if and only if $J_A$ is $\mm_A$-primary.
An analogous statement holds for $\bar A$.
\end{prp}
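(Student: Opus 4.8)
The plan is to derive both statements from the Jacobian criterion for complete intersections. Since $A$ is a complete intersection we may, as noted above, take $g_1,\dots,g_t$ to be a regular sequence with $d+t=n$. I would then invoke the fact that, for any $\pp\in\Spec A$, the local ring $A_\pp$ is regular if and only if the Jacobian matrix $(\p g_i/\p x_\nu)$ has rank $t$ modulo $\pp$, equivalently some $t\times t$ minor avoids $\pp$, equivalently $J_A\not\subseteq\pp$; this is the complete intersection version of the Jacobian criterion, which holds verbatim for convergent power series over a characteristic-zero valued field. Consequently the singular locus $\{\pp\in\Spec A\mid A_\pp\text{ not regular}\}$ is exactly the closed set $V(J_A)$.

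Next, since $A$ is non-regular by our standing convention, $A=A_{\mm_A}$ is not regular, so $J_A\subseteq\mm_A$ and $\mm_A\in V(J_A)$. Hence ``$A$ has an isolated singularity'', i.e.\ $V(J_A)\subseteq\{\mm_A\}$, is equivalent to $V(J_A)=\{\mm_A\}$, equivalently $\sqrt{J_A}=\mm_A$, equivalently $A/J_A$ is Artinian, equivalently $J_A$ is $\mm_A$-primary. This settles the first assertion.

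For $\bar A$ I would run the identical argument inside the polynomial ring $\bar P=K[x_1,\dots,x_n]$: the sequence $g_1,\dots,g_t$ remains regular in $\bar P$ localized at the graded maximal ideal (a regular sequence may be tested after the faithfully flat completion map, which by \eqref{47} lands in $\hat P$), so $\bar A$ is again a graded complete intersection of codimension $t$, its singular locus is $V(J_{\bar A})$, and the same chain of equivalences shows that $\bar A$ has an isolated singularity if and only if $J_{\bar A}$ is $\mm_{\bar A}$-primary. Alternatively, and compatibly with the statement for $A$: the maximal minors of the Jacobian are the same polynomials in both settings, so $J_A=J_{\bar A}\hat A$, and since completion is faithfully flat and preserves finite length, $J_A$ is $\mm_A$-primary iff $\hat A/J_A=\widehat{\bar A/J_{\bar A}}$ is Artinian iff $J_{\bar A}$ is $\mm_{\bar A}$-primary. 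The only point that genuinely requires care is the Jacobian criterion itself over the possibly non-algebraically-closed (or trivially) valued base field $K$; in characteristic zero this is standard, and everything else is formal.
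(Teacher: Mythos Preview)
Your argument is correct and follows essentially the same route as the paper: identify the singular locus of $A$ with $V(J_A)$ via the Jacobian criterion, and then translate ``isolated singularity'' into ``$J_A$ is $\mm_A$-primary''. The only difference is one of packaging. You invoke the Jacobian criterion as a black box (``standard in characteristic zero''), whereas the paper unpacks it: it identifies $J_A$ with the $0$th Fitting ideal $F^0_A\Omega^1_{A/K}$ via the conormal sequence, then cites Scheja--Storch \cite[(6.4),(6.9)]{SS72} for the equivalence ``$A_\pp$ regular $\Leftrightarrow$ $\Omega^1_{A_\pp/K}$ free'' in the analytic setting, and Bruns--Herzog \cite[Lem.~1.4.9]{BH93} for ``free of the expected rank $\Leftrightarrow$ Fitting ideal not in $\pp$''. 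This buys precision exactly at the point you flag as delicate---the validity of the Jacobian criterion over a possibly trivially valued, non-algebraically-closed field---since the Scheja--Storch reference is written in that generality. Your explicit remark that $A$ is non-regular by convention, so that $\mm_A\in V(J_A)$ and one gets an equality rather than just an inclusion, is a nice touch the paper leaves implicit.
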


\begin{proof}
We denote by $\Omega^1_{A/k}$ the universally finite module of differentials of $A$ over $k$.
By the standard sequence
\[
\xymat{
\aa/\aa^2\ar[r] & A\otimes_P\Omega^1_{P/k}\ar[r] & \Omega^1_{A/k}\ar[r] & 0,
}
\]
the Jacobian ideal $J_A$ is the $0$th Fitting ideal $F^0_A\Omega^1_{A/k}$.
By \cite[(6.4),(6.9)]{SS72}, reducedness of $A$ is equivalent to $\rk\Omega^1_{A/k}=d$ and $A_\pp$ is regular if and only if $\Omega^1_{A_\pp/k}$ is free.
Hence, $A_\pp$ being regular is equivalent to $\pp\not\supset F^0_A\Omega^1_{A/k}=J_A$ by \cite[Lem.~1.4.9]{BH93}.
In particular, $A$ having an isolated singularity means exactly that $A/J_A$ is supported at $\mm_A$ and hence that $J_A$ is $\mm_A$-primary as claimed.
The analogous statement for $\bar A$ is proved similarly.
\end{proof}

\begin{rmk}\label{48}
Let $A$ be a quasihomogeneous singularity.
By \eqref{36},
\begin{equation}\label{49}
J_{\bar A}:=\bar J_A=\ideal{\left\vert\frac{\p g}{\p x_\nu}\right\vert\mid |\nu|=t}\unlhd\bar A
\end{equation}
is the Jacobian ideal of $\bar A$ defined analogous to \eqref{25}.
By \eqref{47}, $A$ is a complete intersection if and only if $\bar A$ is locally a complete intersection (see \cite[Def.~2.3.1, Ex.~2.3.21.(c)]{BH93}).
By Proposition~\ref{37}, $A$ is an ICIS if and only if $J_A$ is $\mm_A$-primary.
This is equivalent to $J_{\bar A}$ being $\mm_{\bar A}$-primary.
The latter is then equivalent to $\bar A$ being locally a complete intersection with isolated singularity by \eqref{49} and Proposition~\ref{37}.
Complete intersections are Cohen--Macaulay and hence $(S_2)$ so normality is equivalent to $(R_1)$ by Serre's Criterion (see \cite[\S2.3, Thm.~2.2.22]{BH93}). 
Since $d=\dim A=\dim\bar A$ by \eqref{47} (see \cite[Cor.~2.1.8]{BH93}), normality for both $A$ and $\bar A$ reduces to $d\ge2$.
\end{rmk}

Scheja and Wiebe~\cite[(3.1)]{SW77} (see also \cite[Satz~1.3]{Sai71}) proved that any $K_+$-graded ICIS is quasihomogeneous unless $t=1$ and $g_1\notin\mm_P^3$.
Their starting point (see \cite[(2.5)]{SW77} and \cite[Lem.~1.5]{Sai71}) is that $A$ being an ICIS implies, by Proposition~\ref{37}, that for each $k=1,\dots,n$ one of the following two conditions must holds true.
\begin{enumerate}
\item[$\fA(k)$]\label{32} For some $m\ge2$ and $1\le j\le t$, the monomial $x_k^m$ occurs in $g_j$.
\item[$\fB(k)$] For some pairwise different $1\le\nu_1,\dots,\nu_t\le n$, each $g_j$ contains a monomial $x_k^{m_j}x_{\nu_j}$ for some $m_j\ge1$.
\end{enumerate}
The following result gives numerical constraints for $A$ to be a quasihomogeneous ICIS.

\begin{lem}\label{12}
If $A$ is a quasihomogeneous ICIS then
\begin{equation}\label{10}
p_1+\cdots+p_j\ge w_1+\dots+w_j+j
\end{equation}
for all $j=1,\dots,t$.
\end{lem}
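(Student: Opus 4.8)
My plan is to prove the stronger pointwise bounds $p_j\ge w_j+1$ for every $j=1,\dots,t$, and then to deduce \eqref{10} by summing the first $j$ of them. So I fix $j$ and argue by contradiction, assuming $p_j\le w_j$.

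The first step records that minimality of $n=\embdim A$ is equivalent to $\aa\subseteq\mm_P^2$, so none of the $g_i$ has a linear term. Next, for any $i\ge j$ and any $k\le j$ one has $\deg g_i=p_i\le p_j\le w_j\le w_k$, so a monomial of $g_i$ divisible by $x_k$ would have degree $\ge w_k\ge p_i$ and hence would equal $x_k$ itself, which is excluded by $g_i\in\mm_P^2$. Therefore $g_j,\dots,g_t$ involve only $x_{j+1},\dots,x_n$, and, lying in $\mm_P^2$, they in fact lie in $\ideal{x_{j+1},\dots,x_n}^2\unlhd P$.

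The second step passes to $B:=A/\ideal{x_{j+1},\dots,x_n}A$. By the previous step this equals $P/\ideal{g_1,\dots,g_{j-1},x_{j+1},\dots,x_n}\cong K\llangle x_1,\dots,x_j\rrangle/\ideal{\bar g_1,\dots,\bar g_{j-1}}$, a quotient of a regular ring of dimension $j$ modulo at most $j-1$ elements; hence $\dim B\ge1$ by Krull's Hauptidealsatz, so $B$, and therefore $A$, has a prime $\pp$ with $\ideal{x_{j+1},\dots,x_n}A\subseteq\pp\subsetneq\mm_A$. The third step is a short Jacobian computation: for every $k$ the entry $\p g_j/\p x_k$ lies in $\ideal{x_{j+1},\dots,x_n}\subseteq\pp$ — it is $0$ for $k\le j$, and it lies in $\ideal{x_{j+1},\dots,x_n}$ for $k>j$ precisely because $g_j\in\ideal{x_{j+1},\dots,x_n}^2$. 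Since $j\le t$, each $t\times t$ minor of the Jacobian matrix $(\p g_i/\p x_k)$ picks up a factor from the $j$-th row in every one of the products defining it, hence lies in $\pp$; therefore $J_A\subseteq\pp\subsetneq\mm_A$, so $J_A$ is not $\mm_A$-primary, contradicting Proposition~\ref{37}. This establishes $p_j\ge w_j+1$, and \eqref{10} follows by summation.

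The point I expect to be most delicate is the interplay, in the second and third steps, between the degree hypothesis $p_j\le w_j$ and the minimality hypothesis $\aa\subseteq\mm_P^2$: it is only because of the latter that $g_j,\dots,g_t$ fall into the \emph{square} of the coordinate ideal, and it is only then that the \emph{entire} $j$-th row of the Jacobian — not merely a single entry — vanishes modulo $\pp$, which is what forces a singular prime distinct from $\mm_A$. The conditions $\fA(k)$ and $\fB(k)$ on page~\pageref{32} encode the same obstruction combinatorially and could be fed into the argument in place of Proposition~\ref{37}, but routing it directly through the Jacobian ideal seems the most economical.
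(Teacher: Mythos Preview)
Your proof is correct and follows essentially the same route as the paper: both assume $p_j\le w_j$ for some $j$, observe that then $g_j,\dots,g_t$ involve only $x_{j+1},\dots,x_n$, and conclude that $J_A$ lies in a non-maximal prime of $A$, contradicting Proposition~\ref{37}. Your version is in fact slightly sharper --- you prove the pointwise bound $p_j\ge w_j+1$ directly, whereas the paper's induction on $j$ is dispensable (the contradiction uses only $p_j\le w_j$, not the inductive hypothesis) --- and you make explicit the role of $\aa\subseteq\mm_P^2$ in ensuring the $j$-th Jacobian row vanishes modulo $\ideal{x_{j+1},\dots,x_n}$, which the paper leaves implicit.
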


\begin{proof}
We proceed by induction on $j$.
Assume that $p_1+\cdots+p_{j-1}\ge w_1+\dots+w_{j-1}+j-1$ but $p_1+\cdots+p_j\le w_1+\dots+w_j+j-1$.
Then $p_j\le w_j$ and hence $g_i=g_i(x_{j+1},\dots,x_n)$ for all $i=j,\dots,n$.
Then $J_A$ maps to zero in
\[
A/\ideal{x_{j+1},\dots,x_n}=K\llangle x_1,\dots,x_j\rrangle/\ideal{g_1,\dots,g_{j-1}}
\]
and hence $J_A$ cannot be $\mm_A$-primary as required by Proposition~\ref{37}.
\end{proof}

\section{Negative derivations}

Let $A$ be a quasihomogeneous singularity as in \S\ref{27}.
The target of our investigations is the positively graded $A$-module $\Theta_A=\Der_KA$ of $K$-linear derivations on $A$.
More precisely, we are concerned with the question whether its negative part
\[
\Theta_{A,<0}=\Theta_{\bar A,<0}=\bigoplus_{i<0}\Theta_{A,i}
\]
is trivial.
A priori this condition depends on the choice of a grading.
In Proposition~\ref{19} below, we shall prove the independence of this choice for a general singularity under a strong hypothesis satisfied in the ICIS case (see Corollary~\ref{41}). 
To this end, we write (see \cite[(2.1)]{SW73})
\begin{equation}\label{42}
\Theta_A=\Theta_{\aa\subset P}/\aa\Theta_P
\end{equation}
as a quotient of a $(k,P)$-Lie algebra
\[
\Theta_{\aa\subset P}:=\{\delta\in\Theta_P\mid\delta\aa\subset\aa\}\unrhd\aa\Theta_P
\]
of \emph{logarithmic derivations} along $\aa$ by the $(k,P)$-Lie ideal $\aa\Theta_P$.

\begin{prp}\label{19}
Let $A$ be a quasihomogeneous singularity with positive grading given by $\chi$ and assume that
\begin{align}
\Theta_{\aa\subset P}&=P\chi+\Theta'_P+\aa\Theta_P\label{43},\\
\text{for some}\quad\Theta'_P&\subset\mm_P^2\Theta_P\label{44}.
\end{align}
Then the condition $\Theta_{A,<0}=0$ and the $p_1,\dots,p_t$ in \eqref{1} are independent of the chosen positive grading.
\end{prp}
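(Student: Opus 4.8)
The plan is to show that the two quantities in the statement can be read off from the ring $\hat A$ together with its filtration by powers of the maximal ideal, without reference to any particular choice of grading.

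First I would address the invariance of the $p_1,\dots,p_t$. The ideal $\aa\subset P$ is generated by a minimal homogeneous system $g_1,\dots,g_t$, and the $p_i$ are the degrees of these generators. Since $A$ is a complete intersection, $g_1,\dots,g_t$ is a regular sequence, and the relevant numerical data is encoded in the Poincar\'e series of $\bar A$: one has $H_{\bar A}(T)=\prod_{i=1}^t(1-T^{p_i})\big/\prod_{j=1}^n(1-T^{w_j})$. However, this still involves the $w_j$. The cleaner route is to observe that the hypothesis \eqref{44} forces the degree-$p_i$ strand of $\aa$ to be detected by the filtration: the condition $\Theta'_P\subset\mm_P^2\Theta_P$ together with \eqref{43} says that modulo $\mm_P^2\Theta_P$ the logarithmic derivations are spanned by $\chi$ (and $\aa\Theta_P$). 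I would then argue that a change of positive grading is induced by an automorphism of $\hat A$ — indeed, two Euler derivations $\chi,\chi'$ on $\hat A$ are conjugate by an automorphism of $\hat A$ that preserves $\mm_A$-adic filtration (this is essentially the linear reductivity / uniqueness statement behind \cite[(2.2),(2.3)]{SW73} applied after passing to the associated graded, or can be extracted from the Scheja--Wiebe machinery). Transporting $g_1,\dots,g_t$ by such an automorphism gives another minimal homogeneous (for $\chi'$) generating set, whence the multiset $\{p_1,\dots,p_t\}$ is unchanged. For the graded ordering \eqref{1} this pins down each $p_i$.

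Next, for the invariance of the condition $\Theta_{A,<0}=0$: given a second positive grading given by an Euler derivation $\chi'$, I would use the automorphism $\varphi$ of $\hat A$ conjugating $\chi$ to $\chi'$ to produce an isomorphism of $\hat A$-modules $\Theta_{\hat A}\to\Theta_{\hat A}$ intertwining the two gradings on the completions. The subtlety is that $\Theta_{A,<0}$ is a statement about the (non-complete) graded module $\Theta_{\bar A}$, so I must check that $\Theta_{\bar A,<0}=0$ is equivalent to the corresponding statement after completion, or rather that it is a property visible on $\hat A$. Here \eqref{43}--\eqref{44} do the essential work: they give a presentation of $\Theta_A$ whose generators are $\chi$ (degree $0$) plus derivations lying in $\mm_P^2\Theta_P$, and one can bound from below the degrees that can appear, making $\Theta_{A,<0}$ a finite-dimensional, completion-stable datum. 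I would spell this out by noting that any negative-degree derivation, being in $\mm_P^2\Theta_P+\aa\Theta_P$ modulo $P\chi$, has its degree controlled by the $w_j$ and $p_i$ in a way that, combined with the first part, is grading-independent; concretely $\Theta_{A,i}$ for $i<0$ injects into $(\mm_A^2\Theta_A)_i$, and the latter can be computed from $\hat A$.

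The main obstacle I anticipate is the conjugacy statement for Euler derivations at the level of $\hat A$ and the compatibility of the generating data \eqref{43} under such a conjugation — i.e.\ verifying that hypotheses \eqref{43}--\eqref{44}, which are stated for one grading, are automatically inherited by any other positive grading (otherwise the ``independence'' claim is vacuous for the other grading). I expect this to follow because \eqref{43}--\eqref{44} can be reformulated intrinsically: $\Theta_A/\mm_A\Theta_A$ is spanned by the class of an Euler derivation, equivalently $\Theta_A$ is generated by $\chi$ and $\mm_A\Theta_A$-worth of corrections lying in $\mm_A^2\Theta_A$, and this reformulation makes no reference to the grading. Once that intrinsic form is in hand, both invariance assertions reduce to chasing the presentation \eqref{42} under $\varphi$, and the degree bookkeeping is routine. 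I would therefore structure the proof as: (i) reformulate \eqref{43}--\eqref{44} intrinsically and note grading-independence of the hypothesis; (ii) prove conjugacy of positive gradings by an $\mm_A$-filtered automorphism of $\hat A$; (iii) deduce invariance of $\{p_1,\dots,p_t\}$; (iv) deduce that $\Theta_{A,<0}$ is computed on $\hat A$ and hence its vanishing is grading-independent.
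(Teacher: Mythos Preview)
Your strategy differs substantially from the paper's, and the central step you rely on---conjugacy of two positive Euler derivations by a filtered automorphism of $\hat A$---is neither proved nor correctly referenced. The Scheja--Wiebe results you cite, \cite[(2.2),(2.3)]{SW73}, establish the equivalence between diagonalizable derivations and gradings on a fixed ring; they do not assert that any two such derivations are conjugate, and extracting a conjugacy statement from them would itself require an argument you have not supplied. Your proposed intrinsic reformulation of \eqref{43}--\eqref{44}, namely that $\Theta_A/\mm_A\Theta_A$ is spanned by the class of $\chi$, is also incorrect: the condition $\Theta'_P\subset\mm_P^2\Theta_P$ concerns the ambient module $\Theta_P$ and does not force the image of $\Theta'_P$ in $\Theta_A$ to lie in $\mm_A\Theta_A$. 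Indeed, in the ICIS case the paper computes $\mu(\Theta_A)=\binom{n}{t+1}+1$, so $\Theta_A/\mm_A\Theta_A$ is far from one-dimensional. (You also invoke ``since $A$ is a complete intersection'' at one point, but Proposition~\ref{19} carries no such hypothesis.)

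The paper avoids conjugacy entirely. Given a second Euler derivation $\chi'$, it uses \eqref{43} to lift $\chi'$ to $P$ in the form $\chi'=c\chi+\chi'_+$ with $\chi'_+\in\mm_P\chi+\Theta'_P$; by \eqref{44} and the Leibniz rule, $\chi'_+$ strictly raises $\mm_P$-adic order, and comparison on $\mm_A/\mm_A^2$ forces $c=1$ after normalizing weights. The rest is linear algebra: on each finite-dimensional truncation $\Theta_A/F^{k+1}\Theta_A$ (with $F^\bullet$ the filtration induced from the $\mm_P$-adic one on $\Theta_P$), the adjoint action of $\chi'_+$ is nilpotent because it raises filtration level, so the adjoint actions of $\chi$ and of $\chi'=\chi+\chi'_+$ are simultaneously triangularizable with the same semisimple part and hence the same eigenvalues. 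Taking $k$ large enough yields invariance of $\Theta_{A,<0}=0$; the analogous argument on $\aa/\mm_P\aa$ yields invariance of the $p_i$. This nilpotent-perturbation/Jordan-decomposition idea on truncations is precisely what your proposal is missing; it replaces the global conjugacy step by an elementary local computation.
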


\begin{proof}
Consider a second positive grading with corresponding Euler derivation $\chi'$ (see Lemma~\ref{46}).
By \eqref{42} and \eqref{43}, any $\delta\in\Theta_A$ lifts to an element of $\Theta_{\aa\subset P}$ of the form 
\begin{equation}\label{39}
\delta=c\chi+\delta_+,\quad\delta_+=a\chi+\eta,\quad c\in K,\quad a\in\mm_P,\quad \eta\in\Theta'_P,
\end{equation}
denoted by the same symbol.
By \eqref{44} and the Leibniz rule, 
\begin{equation}\label{40}
\chi\mm_P^k\subset\mm_P^k,\quad\delta_+\mm_P^k\subset\mm_P^{k+1}
\end{equation}
for all $k\ge1$.
Specializing to $\delta=\chi$, this implies that $\chi_+=0$ and $\chi'=c\chi$ on $\mm_A/\mm_A^2=\mm_P/\mm_P^2$ and hence $c=1$ by the definition of a positive grading and our normalization of weights.

Using \eqref{42}, we equip $\Theta_A$ with the decreasing $\mm_P$-adic filtration $F^\bullet$ induced from $\Theta_P$ which is defined as follows
\[
F^k\Theta_A=(\Theta_{\aa\subset P}\cap\mm_P^k\Theta_P)/(\aa\Theta_P\cap\mm_P^k\Theta_P).
\]
Due to \eqref{44}, \eqref{39} and \eqref{40} this is a filtration by $(k,P)$-Lie ideals and
\[
\delta_+F^k\Theta_A\subset F^{k+1}\Theta_A
\]
for the adjoint action of $\delta_+$.
Therefore, for any $k\ge1$, the adjoint action of $\chi'=\chi+\chi_+$ on the truncation
\[
F^{\le k}\Theta_A:=\Theta_A/F^{k+1}\Theta_A
\]
is triangularizable with semisimple part equal to that of $\chi$.
Thus, $\chi'$ and $\chi$ have the same eigenvalues on $F^{\le k}\Theta_A$ for any $k\ge1$.
The first claim then follows by choosing $k$ sufficiently large.
A similar argument yields the second claim.
\end{proof}

For a Gorenstein singularity $A$, there is a natural way to produce elements of $\Theta_A$.
The $A$-submodule $\Theta_A'\subset\Theta_A$ of \emph{trivial derivations} is by definition the image of the inclusion
\begin{equation}\label{51}
\Omega_{A/K}^{d-1}\into\omega_{A/K}^{d-1}=\Hom_A(\Omega_{A/K}^1,\omega_{A/K}^d)=\Theta_A\otimes_A\omega_{A/K}^d\cong\Theta_A.
\end{equation}
We return to the case of an ICIS singularity $A$.
For $1\le\nu_0<\dots<\nu_t\le n$ with complementary indices $1\le\mu_1<\dots<\mu_{d-1}\le n$, the lift to $P$ of the image of $dx_{\mu_1}\wedge\dots\wedge dx_{\mu_{d-1}}$ can be written (up to sign) explicitly as 
\begin{equation}\label{3}
\delta_\nu:=
\begin{vmatrix}
\p_{\nu_0} & \cdots & \p_{\nu_t}\\
\p_{\nu_0}g_1 & \cdots & \p_{\nu_t}g_1\\
\vdots & & \vdots \\
\p_{\nu_0}g_t & \cdots & \p_{\nu_t}g_t
\end{vmatrix}.
\end{equation}
Note that 
\begin{align}
\deg\delta_\nu&=p_1+\dots+p_t-w_{\nu_0}-\dots-w_{\nu_t}\label{2},\\
\delta_\nu g_j&=0\label{4}
\end{align}
for all $j=1,\dots,t$ and $\nu$.
Consider the $P$-module
\begin{equation}\label{45}
\Theta'_P:=\ideal{\delta_\nu\mid1\le\nu_0<\dots<\nu_t\le n}_P\subset\Theta_P.
\end{equation}
The key to our investigations is the following result due to Kersken~\cite[(5.2)]{Ker84}.
From now on we assume in addition that $A$ is quasihomogeneous and normal, that is, $\dim A\ge2$.

\begin{thm}[Kersken]\label{17}
Let $A$ be a quasihomogeneous normal ICIS.
Then the module $\Theta_A$ of $K$-linear derivations on $A$ is generated by the Euler derivation $\chi$ and the trivial derivations $\Theta_A'$.
\end{thm}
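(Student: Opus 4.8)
The plan is to reduce the assertion to the punctured spectrum and then to control a finite-length defect by means of the Euler derivation $\chi$. Set $U:=\Spec A\setminus\{\mm_A\}$; by the isolated singularity hypothesis and Proposition~\ref{37}, $U$ is smooth, so $\Theta_U:=\Theta_A|_U$ is locally free of rank $d=\dim A$. Since $A$ is a normal complete intersection of dimension $d\ge2$, the module $\Theta_A=\Hom_A(\Omega^1_{A/K},A)$ is reflexive (a dual over a normal domain), and as $A$ is Cohen--Macaulay this forces $\depth_{\mm_A}\Theta_A\ge2$, so restriction $\Theta_A\to\Gamma(U,\Theta_U)$ is an isomorphism. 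The first step is to show that on $U$ the trivial derivations already generate everything. On $U$ the conormal sequence $0\to\aa/\aa^2\to A\otimes_P\Omega^1_{P/K}\to\Omega^1_{A/K}\to0$ is a short exact sequence of locally free $\O_U$-modules, and $\wedge^t(\aa/\aa^2)$ is invertible, generated by the class of $dg_1\wedge\cdots\wedge dg_t$; contracting this section against $\wedge^{t+1}(A\otimes_P\Theta_P)$ sends $\p_{\nu_0}\wedge\cdots\wedge\p_{\nu_t}$ to $\pm\delta_\nu$, with $\delta_\nu$ as in \eqref{3}, and a local computation (splitting off the conormal subbundle) shows that this map is surjective onto $\Theta_U$. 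Equivalently, the injection $c^{d-1}\colon\Omega^{d-1}_{A/K}\into\omega^{d-1}_{A/K}$ of \eqref{51} restricts on $U$ to the canonical isomorphism $\wedge^{d-1}\Omega^1_U\xrightarrow{\ \sim\ }\Hom_{\O_U}(\Omega^1_U,\wedge^d\Omega^1_U)$. Hence $\Theta_A'$ and $\Theta_A$ coincide after localizing at every prime other than $\mm_A$, so $\Theta_A/\Theta_A'\cong\Coker(c^{d-1})$ has finite length.

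It remains to prove that this finite-length module is generated over $A$ by the class $[\chi]$ of the Euler derivation, i.e.\ that $\Theta_A=A\chi+\Theta_A'$. That $\chi$ is genuinely needed can be seen as follows: $\omega^d_{A/K}$ is invertible (free of rank one up to a degree shift), and the natural map $c^d\colon\Omega^d_{A/K}\to\omega^d_{A/K}$ has image $J_A$ times the canonical generator, so $\Coker(c^d)\cong A/J_A$, which has finite length by Proposition~\ref{37} but is in general nonzero; thus the canonical generator $\eta_0$ of $\omega^d_{A/K}$ --- the Gelfand--Leray form, characterized on $U$ by $dg_1\wedge\cdots\wedge dg_t\wedge\eta_0=dx_1\wedge\cdots\wedge dx_n$ --- need not come from $\Omega^\bullet_{A/K}$, and it is exactly $\eta_0$ that corresponds to $\chi$ under $\Theta_U\cong\Omega^{d-1}_U$ (the isomorphism $\delta\mapsto\iota_\delta\eta_0$, using $\beta\wedge\iota_\delta\eta_0=\langle\beta,\delta\rangle\eta_0$ for one-forms $\beta$ on $U$). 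To establish the required surjectivity I would pass to the graded model $\bar A$ and invoke the quasihomogeneous Poincar\'e lemma: by Cartan's formula $L_\chi=d\iota_\chi+\iota_\chi d$ and the fact that $\chi$ is Euler, $L_\chi$ acts on $\Omega^\bullet_{\bar A}$ as multiplication by the weighted degree, so $\iota_\chi$ is a contracting homotopy on $(\Omega^\bullet_{\bar A})_m$ for every degree $m\ne0$. Combining this with the adjointness of $c^{d-1}$ to the pairing $\Omega^{d-1}_{A/K}\otimes_A\Omega^1_{A/K}\xrightarrow{\ \wedge\ }\Omega^d_{A/K}\xrightarrow{\ c^d\ }\omega^d_{A/K}$, and tracking the differentials induced on $\ker c^\bullet$ and $\Coker c^\bullet$, one reduces the determination of $\Coker(c^{d-1})$ to that of $\Coker(c^d)\cong A/J_A$ and finds that $\Coker(c^{d-1})$ is the cyclic module $A\cdot[\chi]$.

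The main obstacle I anticipate is precisely this last reduction --- pinning down $\Coker(c^{d-1})$ on the nose rather than merely showing it has finite length. This is what Kersken's theory of regular (residual) differential forms $\omega^\bullet_{A/K}$ on an isolated complete intersection singularity supplies; it rests on the Koszul resolution of $A$ over $P$, the residue pairing into $\omega^d_{A/K}\cong A(a)$ with $a=\sum_i p_i-\sum_j w_j$, and the contraction operator $\iota_\chi$, and it is here that both the complete intersection hypothesis and the isolated singularity hypothesis (that $J_A$ be $\mm_A$-primary, Proposition~\ref{37}) enter in an essential way; see \cite[(5.2)]{Ker84}.
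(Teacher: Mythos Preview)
The paper does not supply a proof of Theorem~\ref{17}; it is quoted as a result of Kersken with the bare citation~\cite[(5.2)]{Ker84}. So there is no ``paper's own proof'' to compare against.

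Your outline has the right architecture: reflexivity of $\Theta_A$ over a normal Cohen--Macaulay ring of dimension $\ge2$ gives $\Theta_A\cong\Gamma(U,\Theta_U)$; on the smooth locus $U$ the map $c^{d-1}\colon\Omega^{d-1}\to\omega^{d-1}$ is an isomorphism, so $\Theta_A/\Theta_A'\cong\Coker(c^{d-1})$ has finite length; and the remaining task is to show this finite-length module is cyclic on $[\chi]$. But that last step is the entire content of the theorem, and you yourself flag it as the ``main obstacle'' and hand it off to~\cite[(5.2)]{Ker84}. Your sketch of how it should go --- exactness of the $\iota_\chi$-complex on $\Omega^\bullet_{\bar A}$ and on $\omega^\bullet_{\bar A}$ in nonzero weighted degree via Cartan's formula, then a diagram chase on the cokernels of $c^\bullet$ --- is plausible, but making it precise requires controlling $\ker c^\bullet$ (torsion in $\Omega^\bullet_{A/K}$) and the weight-zero pieces, and that is exactly the work Kersken does. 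As written, your proposal is therefore not an independent proof but a reduction ending in the same citation the paper already gives.

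The closest the paper comes to the mechanism is diagram~\eqref{52} in the proof of the subsequent Corollary (for $d\ge3$): there one sees $\Coker(c^{d-1})\cong H^1_{\mm_A}(\Omega^{d-1}_{A/K})$ together with the isomorphism $H^1_{\mm_A}(\Omega^d_{A/K})\xrightarrow{\ \iota_\chi\ } H^1_{\mm_A}(\Omega^{d-1}_{A/K})$, which is morally the surjectivity of $\iota_\chi$ on cokernels you want. But the exactness of the columns and the top isomorphism in~\eqref{52} are again drawn from~\cite{Ker84} and~\cite{Wah87}, not established in this paper.
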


Although Kersken only states that $\Theta'_A$ is minimally generated by the $\delta_\nu$ in \eqref{3}, his arguments show that together with $\chi$ they form a minimal set of generators of $\Theta_A$.
We denote by $\mu(-)$ the minimal number of generators.

\begin{cor}
Let $A$ be quasihomogeneous normal ICIS. 
Then $\Theta_A$ is minimally generated by the Euler derivation $\chi$ and the trivial derivations $\delta_\nu$ in \eqref{3}.
In particular,
\[
\mu(\Theta_A)={n\choose t+1}+1.
\]
\end{cor}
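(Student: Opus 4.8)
The plan is to read this off Kersken's Theorem~\ref{17} together with the minimality recorded in the remark just above, the remaining content being a count and the verification that no generator is redundant. By the graded Nakayama lemma, $\mu(\Theta_A)=\dim_K\Theta_A/\mm_A\Theta_A$, so it suffices to show that the residue classes of $\chi$ and of the $\delta_\nu$ from \eqref{3} — one for each tuple $1\le\nu_0<\dots<\nu_t\le n$ — form a $K$-basis of $\Theta_A/\mm_A\Theta_A$. That they span is exactly Theorem~\ref{17}; since a strictly increasing tuple $\nu_0<\dots<\nu_t$ is nothing but a $(t+1)$-element subset of $\{1,\dots,n\}$, this already gives $\mu(\Theta_A)\le\binom{n}{t+1}+1$ and reduces the corollary to the $K$-linear independence of the classes $\bar\chi,\bar\delta_\nu$.

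The independence of the $\bar\delta_\nu$ among themselves is Kersken's assertion that the $\delta_\nu$ minimally generate the module $\Theta_A'$ of trivial derivations; the strengthening to the statement that $\bar\chi$ together with the $\bar\delta_\nu$ remain independent in the larger quotient $\Theta_A/\mm_A\Theta_A$ is what the remark extracts from his argument. It seems worthwhile to record explicitly why $\chi$ cannot be absorbed. Since $A$ is an ICIS, every $\delta\in\Theta_A$ is tangent to the singular point, i.e. $\delta(\mm_A)\subseteq\mm_A$ (each $\delta$ preserves the Jacobian ideal $J_A$, hence its radical, which is $\mm_A$ by Proposition~\ref{37}). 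Lifting via \eqref{42}, it follows that $\Theta_A$ lands in $\mm_P\Theta_P$ and $\mm_A\Theta_A$ lands in $\mm_P^2\Theta_P$ (using $\aa\subseteq\mm_P^2$, forced by the minimality $n=\embdim A$). The Euler derivation lifts to $\sum_iw_ix_i\p_i$, which is not in $\mm_P^2\Theta_P$ because the $w_i$ are positive; whereas for $t\ge2$ the operator $\delta_\nu$ itself lies in $\mm_P^t\Theta_P\subseteq\mm_P^2\Theta_P$, since expanding \eqref{3} along its first row shows that its coefficients are $\pm$ the $t\times t$ minors of $\left(\p g_i/\p x_j\right)$, each lying in $\mm_P^t$ because $g_i\in\mm_P^2$. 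Hence $\chi\notin\mm_A\Theta_A+\sum_\nu A\delta_\nu$, so $\bar\chi$ is not in the span of the $\bar\delta_\nu$.

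With the independence in hand the count finishes the proof: there are $\binom{n}{t+1}$ tuples $1\le\nu_0<\dots<\nu_t\le n$, and adding the single generator $\chi$ gives $\mu(\Theta_A)=\binom{n}{t+1}+1$.

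The step I expect to be the genuine obstacle is the precise linear independence of the $\delta_\nu$ inside $\Theta_A/\mm_A\Theta_A$. One cannot simply transport Kersken's minimality of $\{\delta_\nu\}$ in $\Theta_A'$ along the inclusion $\Theta_A'\into\Theta_A$, since $\mm_A\chi$ may meet $\Theta_A'$ nontrivially, so the induced map $\Theta_A'/\mm_A\Theta_A'\to\Theta_A/\mm_A\Theta_A$ need not be injective; this forces one back into the proof of Kersken's theorem, as the remark indicates. The hypersurface case $t=1$ is the other delicate point, for there the crude $\mm_P$-adic estimate separating $\chi$ from the $\delta_\nu$ degenerates when $g_1\notin\mm_P^3$, and one again leans on the analysis of Kersken (and Kantor~\cite{Kan79}).
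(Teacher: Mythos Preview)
Your proposal is not incorrect, but it openly defers the crux---the linear independence of the $\bar\delta_\nu$ in $\Theta_A/\mm_A\Theta_A$---to ``Kersken's arguments'' via the remark, and you correctly flag that the naive transport from $\Theta_A'/\mm_A\Theta_A'$ fails. The paper's proof takes a different route that sidesteps this obstacle entirely. Rather than proving independence of generators, it computes $\mu(\Theta_A)$ structurally: for $d\ge3$ it invokes the commutative diagram relating $\Omega^\bullet_{A/K}$, $\omega^\bullet_{A/K}$, and local cohomology $H^1_{\mm_A}$ to obtain a short exact sequence
\[
0\longrightarrow A\xrightarrow{\ \chi\ }\Theta_A\longrightarrow \chi(\omega_{A/K}^{d-1})\otimes(\omega_{A/K}^d)^{-1}\longrightarrow 0,
\]
together with the identifications $\chi(\omega^{d-1})\cong\chi(\Omega^{d-1})\cong\Omega^{d-1}/\chi(\Omega^d)$ and the observation $\chi(\Omega^d)\subset\mm_A\Omega^{d-1}$. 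Since $\chi\notin\mm_A\Theta_A$, tensoring with $A/\mm_A$ gives $\mu(\Theta_A)=1+\mu(\Omega^{d-1}_{A/K})=1+\binom{n}{d-1}=1+\binom{n}{t+1}$. The case $d=2$ is dispatched by a citation to Wahl.

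What this buys: the paper never needs the delicate independence of the $\bar\delta_\nu$ modulo $\mm_A\Theta_A$, nor even Kersken's stated minimality of $\{\delta_\nu\}$ in $\Theta_A'$; the count $\mu(\Omega^{d-1})=\binom{n}{d-1}$ is immediate from $n=\embdim A$. Minimality of the explicit generating set $\{\chi,\delta_\nu\}$ then follows by comparing cardinalities with Theorem~\ref{17}. Your approach is more elementary in that it avoids regular differentials and local cohomology, and your order-of-vanishing argument showing $\chi\notin\mm_A\Theta_A+\sum_\nu A\delta_\nu$ for $t\ge2$ is a nice observation---but as you yourself note, it leaves the main step as a black box, and the hypersurface case $t=1$ with $g_1\notin\mm_P^3$ remains unaddressed.
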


\begin{proof}
Since the case $d=2$ is covered by \cite[Prop.~1.12]{Wah87}, we may assume that $d\ge3$.
In this case, the inclusion \eqref{51} fits into the following commutative diagram with exact rows and columns (see \cite[Proof of (4.8)]{Ker84} or \cite[Prop.~1.7]{Wah87}).
\begin{equation}\label{52}
\xymat{
& 0 & 0\\
0\ar[r] & H_{\mm_A}^1(\Omega^d_{A/K})\ar[u]\ar[r]^-\chi_-\cong & H_{\mm_A}^1(\Omega^{d-1}_{A/K})\ar[u]\ar[r] & 0 \\
0\ar[r] & \omega_{A/K}^d\ar[u]\ar[r]^-\chi & \omega_{A/K}^{d-1}\ar[u]\ar[r]^-\chi & \omega_{A/K}^{d-2}\ar[u]\\
0\ar[r] & \Omega_{A/K}^d\ar[u]\ar[r]^-\chi & \Omega_{A/K}^{d-1}\ar[u]\ar[r]^-\chi & \Omega_{A/K}^{d-2}\ar[u]_-\cong\\
&&0\ar[u]&0\ar[u]
}
\end{equation}
It follows that 
\[
\chi(\omega_{A/K}^{d-1})\cong\chi(\Omega_{A/K}^{d-1})\cong\Omega_{A/K}^{d-1}/\chi(\Omega_{A/K}^d)
\]
where $\chi(\Omega_{A/K}^d)\subset\mm_A\Omega_{A/K}^{d-1}$ and hence
\[
\mu(\chi(\Omega_{A/K}^{d-1}))=\mu(\Omega_{A/K}^{d-1})=\mu(\Theta'_A).
\]
Now the middle row of \eqref{52} yields an exact sequence
\[
\xymat{
0\ar[r] & A\ar[r]^-\chi & \Theta_A\ar[r] & \chi(\omega_{A/K}^{d-1})\otimes(\omega_{A/K}^d)^{-1}\ar[r] & 0
}
\]
Since $\chi\notin\mm_A\Theta_A$, the claim follows.
\end{proof}

Note that $\Theta'_P$ in \eqref{45} satisfies \eqref{44} due to \eqref{3} unless $t=1$ and $g_1\notin\mm_P^3$.
As a consequence of Proposition~\ref{19} and Theorem~\ref{17} we therefore obtain the following result.
It is crucial for Example~\ref{22} to be a counter-example to Wahl's Conjecture.

\begin{cor}\label{41}
Let $A$ be a quasihomogeneous normal ICIS.
Unless $t=1$ and $g_1\notin\mm_P^3$, the condition $\Theta_{A,<0}=0$ and the $p_1,\dots,p_t$ in \eqref{1} are independent of the choice of a positive grading.\qed
\end{cor}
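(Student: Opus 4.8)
The plan is to deduce Corollary~\ref{41} directly by verifying that the hypotheses of Proposition~\ref{19} hold for a quasihomogeneous normal ICIS, with $\Theta'_P$ taken to be the module \eqref{45} generated by the trivial derivations $\delta_\nu$ of \eqref{3}. Two things must be checked: first, the decomposition \eqref{43}, namely $\Theta_{\aa\subset P}=P\chi+\Theta'_P+\aa\Theta_P$; and second, the containment \eqref{44}, namely $\Theta'_P\subset\mm_P^2\Theta_P$, which is exactly the point where the exceptional case $t=1$, $g_1\notin\mm_P^3$ has to be excluded.

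\textbf{Step 1: The decomposition \eqref{43}.} By \eqref{42} we have $\Theta_A=\Theta_{\aa\subset P}/\aa\Theta_P$, so the equality \eqref{43} is equivalent to saying that the images of $\chi$ and of the $\delta_\nu$ generate $\Theta_A$ as an $A$-module (note that each $\delta_\nu$ indeed lies in $\Theta_{\aa\subset P}$ by \eqref{4}). But this is precisely the content of Kersken's Theorem~\ref{17}: $\Theta_A$ is generated by the Euler derivation $\chi$ and the trivial derivations $\Theta'_A$, and by the explicit description around \eqref{3} the $\delta_\nu$ lift the generators of $\Theta'_A$ to $P$. Since we are assuming $A$ is a quasihomogeneous normal ICIS, Theorem~\ref{17} applies verbatim, so \eqref{43} holds. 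Here one should remark that $\Theta'_P$ as defined in \eqref{45} surjects onto $\Theta'_A$ under $\Theta_{\aa\subset P}\onto\Theta_A$, so that $P\chi+\Theta'_P$ already spans $\Theta_A$ modulo $\aa\Theta_P$.

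\textbf{Step 2: The containment \eqref{44}.} Looking at the determinantal formula \eqref{3}, each entry of the bottom $t$ rows is a partial derivative $\p_{\nu_i}g_j$, which is a homogeneous polynomial of degree $p_j-w_{\nu_i}$; since $A$ is a complete intersection that is not a smooth hypersurface, and in the excluded exceptional case aside we have $g_j\in\mm_P^2$ for all $j$ (either because $t\ge2$, so Scheja--Wiebe's dichotomy forces quasihomogeneity and each $g_j\in\mm_P^2$, or because $t=1$ with $g_1\in\mm_P^3$), each $\p_{\nu_i}g_j$ lies in $\mm_P$. Expanding the determinant along the first row, $\delta_\nu=\sum_{i=0}^t(-1)^i\p_{\nu_i}\cdot M_i$ where $M_i$ is a $t\times t$ minor of partial derivatives, hence a product of $t\ge1$ elements of $\mm_P$; thus $M_i\in\mm_P^t\subset\mm_P$ when $t\ge1$, and in fact we need $M_i\in\mm_P^2$, which holds as soon as $t\ge2$, or when $t=1$ provided $\p_{\nu_i}g_1\in\mm_P^2$, i.e. $g_1\in\mm_P^3$. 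This is exactly the stated exclusion. So under our hypotheses every coefficient of $\delta_\nu\in\Theta'_P$ lies in $\mm_P^2$, giving $\Theta'_P\subset\mm_P^2\Theta_P$, which is \eqref{44}.

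\textbf{Conclusion.} With \eqref{43} and \eqref{44} verified, Proposition~\ref{19} applies directly: the vanishing condition $\Theta_{A,<0}=0$ and the degrees $p_1,\dots,p_t$ are independent of the chosen positive grading. The main (and really the only) delicate point is bookkeeping the order-of-vanishing estimate in Step~2 and seeing that the single failure mode is precisely $t=1$ with $g_1\notin\mm_P^3$; everything else is a direct invocation of Theorem~\ref{17} and Proposition~\ref{19}. I would phrase the proof in one or two sentences, since the paper has already isolated all the substance into those two earlier results, and indeed the excerpt ends the corollary with \qed, signalling that no further argument beyond this assembly is expected.
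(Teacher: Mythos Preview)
Your proof is correct and follows exactly the paper's approach: the sentence preceding the corollary already records that $\Theta'_P$ from \eqref{45} satisfies \eqref{44} by inspection of \eqref{3} unless $t=1$ and $g_1\notin\mm_P^3$, and that Theorem~\ref{17} yields \eqref{43}, so Proposition~\ref{19} applies. One small clean-up: the reason every $g_j\in\mm_P^2$ is simply the standing minimality assumption $n=\embdim A$ (so $\aa\subset\mm_P^2$), not any Scheja--Wiebe dichotomy; with that, your order-of-vanishing count in Step~2 goes through verbatim.
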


We shall now derive numerical constraints for minimal negative trivial derivations.
To this end, suppose that $0\ne\eta\in\Theta_{A,<0}$.
For degree reasons (see \eqref{1}), $\eta$ can be written as 
\begin{equation}\label{7}
\eta=q_1\p_1+\cdots +q_n\p_n,\quad q_i=q_i(x_{i+1},\dots,x_n)
\end{equation}
By Theorem~\ref{17}, we may assume that $\eta=\delta_\nu\ne0$ is a trivial derivation as in \eqref{3}.
By \eqref{1} and \eqref{2}, we may further assume that $\nu_i=i+1$ for all $i=0,\dots,t$ and hence $q_i=0$ for all $i>t+1$.
The preceding arguments combined with \eqref{2} and \eqref{4}, can be summarized as follows.

\begin{lem}\label{54}
Let $A$ be a quasihomogeneous normal ICIS.
Then, for all $\eta\in\Theta_{A,<0}$ and all $j=1,\dots,t$, we have
\begin{equation}\label{6}
\eta g_j=0.
\end{equation}
If $\Theta_{A,<0}\ne0$ then there is a derivation $0\ne\eta\in\Theta_{A,<0}$ as in \eqref{7} with $q_i=0$ for all $i>t$.
It gives rise to an inequality
\begin{equation}\label{5}\pushQED{\qed}
p_1+\dots+p_t<w_1+\dots+w_{t+1}.\qedhere
\end{equation}
\end{lem}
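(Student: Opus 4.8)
The plan is to read off all three assertions from Kersken's structure theorem (Theorem~\ref{17}) together with the numerical data in \eqref{1}, \eqref{2} and \eqref{4}, essentially following the order of the discussion preceding the statement.

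I would first prove \eqref{6}. By Theorem~\ref{17}, $\Theta_A$ is generated over $A$ by the Euler derivation $\chi$, which has degree $0$, and the trivial derivations $\delta_\nu$ of \eqref{3}. Since $\bar A=\bigoplus_{i\ge0}A_i$ carries no homogeneous elements of negative degree, extracting the degree-$i$ component of such an expansion for $i<0$ wipes out the $\chi$-contribution (its coefficient would lie in $A_i=0$) and the contribution of every $\delta_\nu$ with $\deg\delta_\nu\ge0$. Hence any $\eta\in\Theta_{A,<0}$ is, after a homogeneous lift to $\Theta_{\aa\subset P}$, a $P$-linear combination of the $\delta_\nu$ of negative degree modulo $\aa\Theta_P$; as each such $\delta_\nu$ satisfies $\delta_\nu g_j=0$ by \eqref{4} — the determinant \eqref{3} acquires a repeated row, namely $(\p_{\nu_0}g_j,\dots,\p_{\nu_t}g_j)$, once it is applied to $g_j$ — this gives \eqref{6} (which for $\eta=\delta_\nu$ is exactly \eqref{4}).

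Next, suppose $\Theta_{A,<0}\ne0$. The degree argument above forces at least one $\delta_\nu$ to be nonzero of negative degree. In the form \eqref{7} such a $\delta_\nu$ has $q_i=0$ for $i\notin\{\nu_0,\dots,\nu_t\}$, and I would reduce the index tuple to $(1,\dots,t+1)$: by \eqref{1} any replacement of some $\nu_l$ by a smaller, still unused index can only enlarge the weight sum $w_{\nu_0}+\dots+w_{\nu_t}$, hence by \eqref{2} can only make $\deg\delta_\nu$ smaller, so the issue is to perform such replacements while keeping the corresponding maximal minor of the Jacobian $(\p_kg_j)$ nonzero. Here the complete-intersection conditions $\fA(k)$, $\fB(k)$ for $k=1,\dots,t+1$ on page~\pageref{32} are what should ensure that some $t\times t$ minor of $(\p_kg_j)$ involving only the columns $1,\dots,t+1$ is nonzero, i.e.\ $\delta_{(1,\dots,t+1)}\ne0$. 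This is the step I expect to be the main obstacle. Granting it, $\eta:=\delta_{(1,\dots,t+1)}$ is a nonzero element of $\Theta_{A,<0}$ of the shape \eqref{7}, supported on $\p_1,\dots,\p_{t+1}$, matching the form asserted in the lemma.

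Finally, for \eqref{5}: since $\eta=\delta_{(1,\dots,t+1)}\ne0$ lies in $\Theta_{A,<0}$, formula \eqref{2} gives
\[
0>\deg\eta=p_1+\dots+p_t-(w_1+\dots+w_{t+1}),
\]
which is exactly \eqref{5}. In fact, even without producing this distinguished $\eta$, the mere existence of some nonzero $\delta_\nu$ of negative degree already yields $p_1+\dots+p_t<w_{\nu_0}+\dots+w_{\nu_t}\le w_1+\dots+w_{t+1}$ by \eqref{1}, so the inequality itself does not depend on the delicate reduction in the previous paragraph.
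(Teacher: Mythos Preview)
Your treatment of \eqref{6} and of the inequality \eqref{5} matches the paper's: both follow immediately from Kersken's Theorem~\ref{17} together with \eqref{4}, \eqref{2}, and the weight ordering \eqref{1}. Your final remark, that \eqref{5} already follows from the mere existence of \emph{some} negative-degree $\delta_\nu$ via $w_{\nu_0}+\dots+w_{\nu_t}\le w_1+\dots+w_{t+1}$, is exactly how the paper reads ``by \eqref{1} and \eqref{2}.''

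The only point where you diverge is the middle assertion, and there your worry is misplaced. You correctly isolate the question of whether $\delta_{(1,\dots,t+1)}\ne 0$, but you then propose to settle it by invoking the conditions $\fA(k)$/$\fB(k)$ to force some $t\times t$ minor of $(\p_k g_j)$ in the first $t+1$ columns to be nonzero. That route is neither needed nor clearly sufficient (non-vanishing in $\Theta_P$ is not a priori the same as non-vanishing in $\Theta_A$), and you yourself flag it as the main obstacle. The paper bypasses this entirely: the Corollary immediately following Theorem~\ref{17} states that $\chi$ together with \emph{all} the $\delta_\nu$ form a \emph{minimal} generating set of $\Theta_A$, with $\mu(\Theta_A)=\binom{n}{t+1}+1$. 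In particular every $\delta_\nu$, and specifically $\delta_{(1,\dots,t+1)}$, is nonzero in $\Theta_A$ (indeed, not even in $\mm_A\Theta_A$). Since by \eqref{1} and \eqref{2} its degree is the minimum among all $\deg\delta_\nu$, it is automatically negative once any $\delta_\nu$ is, and it is supported on $\p_1,\dots,\p_{t+1}$. So the ``delicate reduction'' you anticipate is a one-line citation of that Corollary; replace your $\fA/\fB$ paragraph by this and the argument is complete and identical to the paper's.
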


\begin{rmk}\label{30}
For degree reasons (see \eqref{1}), the identity \eqref{6} holds true for any $\eta\in\Theta_{A,<p_t-p_1}$ and any quasihomogeneous singularity $A$ as in \eqref{24}.
\end{rmk}

We now link the conditions $\fA(k)$ and $\fB(k)$ from page \pageref{32} to the existence of a negative derivation.

\begin{lem}\label{13}
Assume that $\eta\in\Theta_{A,<0}$ as in \eqref{7} with $q_i=0$ for all $i\in I\supset\{0,\dots,k-1\}$ satisfies \eqref{6} for all $j=1,\dots,t$ and that $\fA(k)$ holds true.
Then there is a $\chi$-homogeneous coordinate change preserving the preceding conditions which makes $q_k=0$.
\end{lem}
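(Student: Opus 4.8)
The plan is to exploit condition $\fA(k)$ directly: it tells us that some $g_j$ contains the monomial $x_k^m$ with $m\ge 2$, so $\p_k g_j$ contains the monomial $m x_k^{m-1}$ (up to coefficient), which is the lowest-degree part of $\p_k g_j$ in the variables $x_k,\dots,x_n$. First I would extract from the relations $\eta g_j = 0$ the information about the coefficient $q_k$ of $\p_k$ in $\eta$. Since $q_i=0$ for $i\in I\supset\{0,\dots,k-1\}$, the only way $\eta$ can act on the $x_k$-part of $g_j$ is through $q_k\p_k$ together with the $q_i\p_i$ for $i>k$, $i\notin I$. The key observation is that $q_k$ is a polynomial in $x_{k+1},\dots,x_n$ of negative degree relative to $w_k$, so $\deg q_k < w_k$; I want to show we can kill $q_k$ by a coordinate change of the form $x_k\mapsto x_k + q_k/m$ (suitably normalized), i.e. absorb it into $x_k$.

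The main steps, in order: (1) Write $g_j = \frac{1}{m}x_k^m \cdot(\text{unit-ish})\;+\;\text{terms with lower }x_k\text{-degree or not involving }x_k$; more precisely isolate the $x_k^m$ monomial and note every other monomial of $g_j$ either has $x_k$-degree $<m$ or involves only $x_{k+1},\dots,x_n$. (2) Apply the $\chi$-homogeneous coordinate change $\phi: x_k\mapsto x_k - \tfrac{1}{m}q_k$ (and $x_i\mapsto x_i$ otherwise); this is admissible because $q_k$ has the same degree $w_k$ as $x_k$, is a polynomial in $x_{k+1},\dots,x_n$ only, and so $\phi$ is a graded automorphism of $P$ fixing $x_{k+1},\dots,x_n$ and preserving the ideal membership structure and the conditions "$q_i=0$ for $i\in I$". (3) Track how $\eta$ transforms: under $\phi$, $\p_k$ is unchanged (since $\phi$ fixes $x_{k+1},\dots,x_n$ and only shifts $x_k$), and the new coefficient of $\p_k$ becomes $q_k + \phi(\text{something})$ — here I would use the relation $\eta g_j=0$ to identify the "something". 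The point is that differentiating $g_j$ and using $\eta g_j = 0$ forces $m x_k^{m-1}q_k$ to be cancelled by contributions of the $q_i\p_i g_j$ with $i>k$; after the substitution these contributions conspire to cancel $q_k$ itself. (4) Verify that the new $\eta$ still satisfies $\eta g_j = 0$ for all $j$ (automatic, since $\phi$ is an automorphism and $\phi(\aa)=\aa$) and still has the vanishing pattern $q_i=0$ for $i\in I$ (the substitution does not introduce $\p_i$ terms for $i<k$, and for $i>k$ in $I$ one checks the substitution does not disturb them).

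The hard part will be step (3): making precise the bookkeeping that shows the coordinate change $x_k\mapsto x_k-\tfrac1m q_k$ exactly cancels $q_k$ in the $\p_k$-coefficient of $\eta$, rather than merely lowering its order or moving the problem elsewhere. The delicate point is that $\eta g_j=0$ is an identity in $P$, and I must extract from it the precise statement that the "obstruction" $q_k$ appears as the coefficient of $x_k^{m-1}$ in a way that is annihilated by the substitution; this requires isolating the piece of $\eta g_j$ of lowest $x_k$-degree and lowest total degree, where $q_k \cdot \p_k(x_k^m) = m x_k^{m-1} q_k$ is the dominant term and everything else ($q_i \p_i g_j$ for $i>k$, and $q_k$ times lower terms of $g_j$) is of strictly higher order in an appropriate filtration. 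One then argues that after substitution the transformed $\eta$ has $\p_k$-coefficient whose lowest-order part vanishes, and by induction on this order (or by a direct finiteness/degree argument, since everything is $\chi$-homogeneous of a fixed negative degree) one reaches $q_k=0$. I expect this to be handled by a careful choice of monomial order or filtration tailored to $x_k$, combined with the homogeneity constraint $\deg q_k < w_k \le w_i$ for $i<k$ which prevents $q_k$ from ever feeding back into earlier coefficients.
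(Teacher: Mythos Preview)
Your plan has a concrete error in the choice of coordinate change. You first (correctly) note that $\deg q_k<w_k$ since $\deg\eta<0$, but then assert that $x_k\mapsto x_k-\tfrac{1}{m}q_k$ is admissible ``because $q_k$ has the same degree $w_k$ as $x_k$''. These two statements contradict each other, and the first one is the true one: the substitution $x_k\mapsto x_k-\tfrac{1}{m}q_k$ is \emph{not} $\chi$-homogeneous, so it cannot possibly yield the conclusion of the lemma. Even setting homogeneity aside, this substitution does not kill the $\p_k$-coefficient: a direct chain-rule computation shows that the new coefficient of $\p_k$ is $q_k-\tfrac{1}{m}\sum_{i>k}q_i\,\p_i q_k=q_k-\tfrac{1}{m}\eta(q_k)$, which has no reason to vanish. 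Your proposed induction on ``order'' does not help, because at each step the substitution remains inhomogeneous.

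The correct coordinate change comes from $g_j$, not from $\eta$. Expand $g_j=\sum_{\ell=0}^m t_\ell\,x_k^{\,m-\ell}$ with $t_\ell$ independent of $x_k$ and normalize $t_0=\tfrac{1}{m}$; then $t_1$ is $\chi$-homogeneous of degree exactly $w_k$. Since the $q_i$ with $i\ge k$ do not involve $x_k$, reading off the coefficient of $x_k^{m-1}$ in $\eta g_j=0$ gives the \emph{exact} identity
\[
q_k+\sum_{i>k}q_i\,\p_i t_1=0,
\]
so that $\eta=\sum_{i>k}q_i\bigl(\p_i-(\p_i t_1)\p_k\bigr)$. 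The $\chi$-homogeneous change $x_k'=x_k+t_1$ (fixing the other variables) turns $\p_i-(\p_i t_1)\p_k$ into $\p_i'$, hence $q_k'=0$ in one step, with no iteration and no filtration argument. The conditions $q_i=0$ for $i\in I$ and $\eta g_j=0$ are preserved because the change only touches $x_k$ and the $q_i$ for $i>k$ depend only on $x_{i+1},\dots,x_n$.
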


\begin{proof}
As $\fA(k)$ holds by hypothesis, there is a $g:=g_j$ containing a monomial $x_k^m$, $m>1$.
Expanding respect to $x_k$,
\[
g=\sum_{j=0}^m t_jx_k^{m-j},\quad t_j=t_j(x_1,\dots,\wh{x_k},\dots,x_n).
\]
We may normalize $g$ such that $t_0=\frac1m$.
Note that $t_j$ is homogeneous of degree $j\cdot w_k$ and, in particular, $\deg(t_1)=\deg(x_k)$.
Then, ordering terms according to $i=k$ or $i>k$, \eqref{6} becomes
\begin{align}\label{53}
0=\eta(g)&=\sum_{i\ge k}\sum_{j=0}^m q_i\p_i(t_jx_k^{m-j})\\
\nonumber&=\sum_{j=1}^m\left((m-j+1)q_kt_{j-1}+\sum_{i>k}q_i\p_i(t_j)\right)x_k^{m-j}.
\end{align}
By \eqref{7}, all $q_i$, $i\ge k$, are independent of $x_k$.
Thus, using $t_0=\frac1m$, the coefficient equation of $x_k^{m-1}$ in \eqref{53} reads
\[
q_k+\sum_{i>k}q_i\p_i(t_1)=0
\]
and $\eta$ can be rewritten as
\begin{equation}\label{9}
\eta=\sum_{i>k}q_i\cdot(\p_i-\p_i(t_1)\p_k).
\end{equation}
The $\chi$-homogeneous coordinate change
\[
x_i'=
\begin{cases}
x_k+t_1,&\text{ if }i=k,\\
x_i,&\text{ otherwise,}
\end{cases}
\]
replaces $\p_i-\p_i(t_1)\p_k$ in \eqref{9} by $\p_i'$, and thus $q_k$ in \eqref{7} by $0$.
\end{proof}

Our main technical result is the following

\begin{prp}\label{14}
Let $A$ be a quasihomogeneous normal ICIS such that $\Theta_{A,<0}\ne0$.
Then $\fB(k)$ holds true for some $k\le t$ after some $\chi$-homogeneous coordinate change.
Each such $k$ satisfies $k\ge t-d+2$ and $g_k,\dots,g_t\notin\mm_P^3$.
\end{prp}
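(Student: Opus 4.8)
The plan is to normalize $\eta$ by $\chi$-homogeneous coordinate changes until it ``exposes'' a good index $k$, and then to squeeze the two inequalities out of the identities $\eta g_j=0$ together with the numerical constraints already available.

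\textbf{Producing $k\le t$ with $\fB(k)$.} Since $\Theta_{A,<0}\neq0$, Lemma~\ref{54} gives a nonzero $\eta\in\Theta_{A,<0}$ written as in \eqref{7} with $q_i=0$ for all $i>t$, and $\eta g_j=0$ for all $j$ by \eqref{6}. Let $k$ be the least index with $q_k\neq0$; then $k\le t$. If $\fA(k)$ holds, apply Lemma~\ref{13}: it produces a $\chi$-homogeneous coordinate change after which $q_k=0$ while the form \eqref{7}, the relations \eqref{6}, and the vanishing of $q_i$ for $i>t$ all persist, and the new $\eta$ is still nonzero because the coefficients in \eqref{9} cannot all vanish (they recover $q_k$ through the coefficient of $x_k^{m-1}$). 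The least index of a nonzero coefficient thus increases strictly; being bounded by $t$, the procedure halts at an $\eta$ for which that index $k\le t$ has $\fA(k)$ false, whence $\fB(k)$ holds since one of $\fA(k)$, $\fB(k)$ must hold (p.~\pageref{32}). This proves the first assertion.

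\textbf{The bounds on $k$.} Fix such a $k$, keep the reduced $\eta$ (so $q_i=0$ for $i<k$ and $i>t$, and $q_k\neq0$), and expand $g_j=\sum_{l\ge0}t_{j,l}x_k^l$ with $t_{j,l}$ homogeneous in the variables other than $x_k$ of $\chi$-degree $p_j-lw_k$. Failure of $\fA(k)$ forbids a constant term in $t_{j,l}$ for $l\ge2$, and $\aa\subset\mm_P^2$ (as $n=\embdim A$) forbids it for $l=1$; hence $t_{j,l}\in\mm_P$ for $l\ge1$, and the $\fB(k)$-monomial $x_k^{m_j}x_{\nu_j}$ of $g_j$ has $\nu_j\neq k$ (otherwise $x_k^{m_j+1}$, of exponent $\ge2$, would revive $\fA(k)$). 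Substituting the expansion into $\eta g_j=0$ and collecting powers of $x_k$ yields, exactly as in \eqref{53},
\[
(l+1)\,q_k\,t_{j,l+1}+\sum_{i>k}q_i\,\p_i(t_{j,l})=0\qquad(l\ge0,\ 1\le j\le t).
\]
From these identities, using $q_k\neq0$, the degree relations $\deg q_i=w_i+\deg\eta<w_i$, the $\mm_P$-orders just noted, and the numerical inequalities \eqref{5} and \eqref{10}, one deduces that for every $j\in\{k,\dots,t\}$ one has $m_j=1$ and $\nu_j\ge t+2$: informally, a higher $x_k$-power in such a $g_j$ would, through the topmost of the displayed identities, force $\deg q_k\ge w_k$, contradicting $\deg q_k<w_k$; and an index $\nu_j\le t+1$ with $\nu_j\neq k$ would, after summation and comparison of $p_1+\dots+p_t<w_1+\dots+w_{t+1}$ with the partial-sum bounds $p_1+\dots+p_l\ge w_1+\dots+w_l+l$ of Lemma~\ref{12}, contradict $\deg\eta<0$. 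Granting this, for $k\le j\le t$ the $\mm_P$-order-$2$ monomial $x_kx_{\nu_j}$ appears in $g_j$, so $g_j\notin\mm_P^3$; and $j\mapsto\nu_j$ is an injection $\{k,\dots,t\}\hookrightarrow\{t+2,\dots,n\}$, so $t-k+1\le n-t-1=d-1$, i.e. $k\ge t-d+2$.

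The bookkeeping in the first half is routine, as is the reduction to the displayed identities. The crux, and the main obstacle, is the deduction of ``$m_j=1$ and $\nu_j\ge t+2$ for $j\ge k$'' in the second half: it requires a simultaneous control of the weighted degrees $\deg q_i=w_i+\deg\eta$ against the $p_j$ (driven by \eqref{5} and the telescoped form of Lemma~\ref{12}) and of the $\mm_P$-adic orders imposed by the failure of $\fA(k)$, propagated through the family of identities above.
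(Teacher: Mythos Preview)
Your first half (producing $k\le t$ with $\fB(k)$ by iterating Lemma~\ref{13}) matches the paper's argument.

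The second half, however, has a genuine gap, which you yourself flag in the last paragraph: the deduction of ``$m_j=1$ and $\nu_j\ge t+2$ for $j\ge k$'' from the family of identities $(l+1)q_kt_{j,l+1}+\sum_{i>k}q_i\p_i(t_{j,l})=0$ is never carried out. The two ``informal'' explanations you offer are not proofs. For instance, a top $x_k$-power in $g_j$ does not directly force $\deg q_k\ge w_k$ via the top identity: that identity only says $q_kt_{j,M}$ lies in the ideal generated by the $q_i\p_i t_{j,M-1}$ with $i>k$, which gives no immediate lower bound on $\deg q_k$. Routing the argument through $\eta$ in this way is substantially harder than necessary, and also restricts you to the particular $k$ attached to your reduced $\eta$, whereas the statement asserts the bounds for \emph{each} $k\le t$ with $\fB(k)$.

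The paper avoids all of this: once the first half is done, it drops $\eta$ entirely and argues by pure weight arithmetic. From $\fB(k)$ one has $p_j=m_jw_k+w_{\nu_j}$ for every $j$, so summing over $j=k,\dots,t$ and combining the two available inequalities, namely $p_1+\dots+p_t<w_1+\dots+w_{t+1}$ from Lemma~\ref{54} and $p_1+\dots+p_{k-1}\ge w_1+\dots+w_{k-1}+(k-1)$ from Lemma~\ref{12}, one gets
\[
\sum_{j=k}^t(m_j-1)w_j\;\le\;(m_k+\dots+m_t)w_k-\sum_{j=k}^tw_j\;\le\; w_{t+1}-k-\sum_{j=k}^tw_{\nu_j}.
\]
Since the left side is $\ge0$ and each $w_{\nu_j}\ge1$, this single inequality forces $m_k=\dots=m_t=1$ (else the left side is $\ge w_t\ge w_{t+1}$, impossible) and then $\nu_j\ge t+2$ for all $j$ (else $w_{\nu_j}\ge w_{t+1}$, again impossible). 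The two conclusions $g_k,\dots,g_t\notin\mm_P^3$ and $k\ge t-d+2$ follow exactly as you wrote. This computation uses only $\fB(k)$ and the numerical constraints, so it applies to every $k\le t$ for which $\fB(k)$ holds, as required.
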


\begin{proof}
Let $0\ne\eta\in\Theta_{A,<0}$ be given by Lemma~\ref{54}.
For increasing $k\ge1$ with $q_k\ne0$, we repeatedly apply Lemma~\ref{13} with $I=\{1,\dots,k-1,t+2,\dots,n\}$ as long as $\fA(k)$ holds.
The procedure stops with 
\begin{equation}\label{55}
0\ne\eta=q_k\p_k+\dots+q_{t+1}\p_{t+1},\quad q_k\ne0,
\end{equation}
for some $k\le t+1$ by choice of $\eta$ and Lemma~\ref{13}. 
Since $\fA(k)$ fails, $\fB(k)$ must hold true.
In case $k=t+1$ in \eqref{55}, \eqref{6} becomes $\p_kg_j=0$ for all $j=1,\dots,t$.
This would mean that all $g_j$ are independent of $x_k$ in contradiction to the isolated singularity hypothesis.
Therefore, $k\le t$ as claimed.

Combining \eqref{10} and \eqref{5}, we obtain
\begin{equation}\label{11}
p_j+\dots+p_t+j\le w_j+\dots+w_{t+1}
\end{equation}
for all $j=1,\dots,t$.
Using \eqref{1}, $\fB(k)$ and \eqref{11} for $j=k$, we compute
\begin{align*}
m_kw_k+\dots+m_tw_t
&\le(m_k+\dots+m_t)w_k\\
&=\deg(\p_{\nu_k}g_k\cdots\p_{\nu_t}g_t)\\
&=p_k+\dots+p_t-w_{\nu_k}-\dots-w_{\nu_t}\\
&\le w_k+\dots+w_{t+1}-k-w_{\nu_k}-\dots-w_{\nu_t}.
\end{align*}
and hence
\[
(m_k-1)w_k+\dots+(m_t-1)w_t\le w_{t+1}-k-w_{\nu_k}-\dots-w_{\nu_t}.
\]
By \eqref{1}, this forces 
\begin{gather}
m_k=\dots=m_t=1,\label{35}\\
\nonumber w_{t+1}\ge w_{\nu_k}+\dots+w_{\nu_t}+k.
\end{gather}
In particular,
\begin{equation}\label{34}
\nu_k,\dots,\nu_t\ge t+2
\end{equation}
and hence $k\ge t-d+2$.
\end{proof}

\section{ICIS of embedding dimension $5$}\label{31}

\begin{lem}\label{33}
Let $A$ be a quasihomogeneous normal ICIS such that $\Theta_{A,<0}\ne0$.
Then $\fA(k_1)$ and $\fB(k_2)$ for $\{k_1,k_2\}=\{1,2\}$ is impossible.
\end{lem}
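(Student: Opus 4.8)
Suppose for contradiction that $\Theta_{A,<0}\neq 0$ and that, for some $k_1,k_2$ with $\{k_1,k_2\}=\{1,2\}$, both $\fA(k_1)$ and $\fB(k_2)$ hold. Since the statement lies in embedding dimension $5$ we have $n=d+t$ with $d\geq 2$, and hypersurfaces ($t=1$, Kantor~\cite{Kan79}) and normal surfaces ($d=2$, \cite{Wah81}) carry no negative derivations, so $d=3$ and $t=2$. By Lemma~\ref{54} fix $0\neq\eta=q_1\p_1+q_2\p_2\in\Theta_{A,<0}$ with $\eta g_1=\eta g_2=0$. The key preliminary observation is that whenever $\eta$ is reduced to $q_2\p_2$ we must have $q_2\neq 0$ (by Lemma~\ref{13} or since $\eta\neq 0$), and then $\eta g_j=q_2\p_2g_j=0$ forces $\p_2g_1=\p_2g_2=0$, i.e.\ $A=K\llangle x_2\rrangle\wh\otimes_K B$ with $B=K\llangle x_1,x_3,x_4,x_5\rrangle/\aa$ a complete intersection; this contradicts Proposition~\ref{37}, since if $B$ is regular then so is $A$, while if $B$ is singular then $A$ is singular along the prime $\mm_BA\subsetneq\mm_A$.

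I would now split according to whether $\fA(1)$ holds. If $\fA(1)$ holds, Lemma~\ref{13} with $k=1$ clears $q_1$, so that after a $\chi$-homogeneous coordinate change $\eta=q_2\p_2$; if $\fA(2)$ also holds a further application of Lemma~\ref{13} with $k=2$ gives $\eta=0$, and if $\fA(2)$ fails we invoke the preliminary observation --- either way a contradiction.

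So assume $\fA(1)$ fails; then necessarily $k_1=2$, $k_2=1$, i.e.\ $\fA(2)$ and $\fB(1)$ hold, and $q_1\neq 0$ (else $\eta=q_2\p_2$ makes all $g_j$ independent of $x_2$, contradicting $\fA(2)$). Hence the clearing procedure of Proposition~\ref{14} stops already at $k_0=1$, so $m_1=m_2=1$, the $\nu_1,\nu_2$ are distinct and $\geq t+2=4$, whence $\{\nu_1,\nu_2\}=\{4,5\}$, and $w_3\geq w_4+w_5+1$. Consequently $w_{\nu_j}\leq w_4<w_3\leq w_1$, so $p_j=w_1+w_{\nu_j}<2w_1$ and each $g_j$ is linear in $x_1$: $g_j=h_{j,0}+h_{j,1}x_1$ with $h_{j,1}=h_{j,1}(x_4,x_5)$ an irreducible form containing $x_{\nu_j}$. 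Then $\eta g_j=0$ reads $q_1h_{j,1}=-q_2\p_2h_{j,0}$ for $j=1,2$, with $q_2\neq 0$. If $g_l$ carries the monomial $x_2^m$ of $\fA(2)$, then the top power of $x_2$ in $h_{l,0}$ is the pure monomial $c\,x_2^m$, so comparing the coefficients of $x_2^{m-1}$ in $q_1h_{l,1}=-q_2\p_2h_{l,0}$ yields $h_{l,1}\mid q_2$; if moreover $h_{l,1}\nmid h_{j,1}$ for $j\neq l$, the equation for that other index forces $h_{l,1}\mid q_1$, so $\eta/h_{l,1}$ is again a nonzero element of $\Theta_{A,<0}$ of the same form, and iterating gives $h_{l,1}^N\mid q_1$ for every $N$, contradicting $q_1\neq 0$.

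It remains to treat the exceptional case $h_{1,1}\propto h_{2,1}$, which can occur only when $w_4=w_5$: a linear substitution in $x_4,x_5$ turns $h_{1,1}$ into a single variable, and subtracting a multiple of $g_1$ from $g_2$ produces a generator $\tilde g$ with $\p_1\tilde g=0$, whence $\eta\tilde g=q_2\p_2\tilde g=0$ and $q_2\neq 0$ give $\p_2\tilde g=0$; now $\aa=\langle g_1,\tilde g\rangle$ with $g_1$ containing no pure power of $x_1$ and $\tilde g\in K[x_3,x_4,x_5]$, so every $2\times 2$ Jacobian minor lies in the ideal generated by all variables other than $x_1$, forcing $\Sing A$ to contain the $x_1$-axis, again contradicting Proposition~\ref{37}. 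The main obstacle is the bookkeeping: verifying that the coordinate changes above preserve the conditions $\fA(k_i)$ and $\fB(k_i)$ in play, carrying out the coefficient extraction that yields $h_{l,1}\mid q_2$, and handling the exceptional configuration.
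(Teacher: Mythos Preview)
Your argument is far more elaborate than needed and carries genuine gaps. First, you restrict to embedding dimension $5$ (hence $t=2$), but the lemma as stated makes no such assumption, and the paper proves it for arbitrary $t$. Second, your reduction to $\eta=q_1\p_1+q_2\p_2$ relies on the literal wording of Lemma~\ref{54}; however, the argument preceding that lemma (and the set $I=\{1,\dots,k-1,t+2,\dots,n\}$ used in the proof of Proposition~\ref{14}) shows the intended conclusion is $q_i=0$ for $i>t+1$, so for $t=2$ a term $q_3\p_3$ must be allowed. That extra term undermines your ``preliminary observation'' (from $\eta=q_2\p_2+q_3\p_3$ one cannot conclude $\p_2 g_j=0$) and hence much of the subsequent case analysis. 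Third, as you yourself flag, the bookkeeping about whether the coordinate changes of Lemma~\ref{13} preserve $\fA(k_i)$ and $\fB(k_i)$ is never carried out.

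The paper's proof sidesteps all of this with a three-line weight count. Condition $\fA(k_1)$ furnishes some $j_0$ with $p_{j_0}\ge 2w_{k_1}$, while $\fB(k_2)$ gives $p_j\ge w_{k_2}$ for every $j$. Summing, and using $\{k_1,k_2\}=\{1,2\}$ together with $w_1\ge w_2\ge\cdots\ge w_{t+1}$, one obtains
\[
p_1+\cdots+p_t\;\ge\;2w_{k_1}+(t-1)w_{k_2}\;\ge\;w_1+\cdots+w_{t+1},
\]
which directly contradicts inequality~\eqref{5} from Lemma~\ref{54}. No coordinate changes, no case split, no invocation of Proposition~\ref{14} are required.
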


\begin{proof}
Assuming the contrary, one of the $g_j$ has a monomial divisible by $x_{k_1}^2$ by $\fA(k_1)$ and each of the $g_j$ has a monomial divisible by $x_{k_2}$ by $\fB(k_2)$.
In particular, 
\[
p_1+\dots+p_t\ge 2w_{k_1}+(t-1)w_{k_2}\ge w_1+\dots+w_{t+1}
\]
contradicting \eqref{5}.
\end{proof}

\begin{prp}\label{38}
For any quasihomogeneous ICIS $A$ as in \eqref{24} with $n=5$ and $t=2$, we have $\Theta_{A,<0}=0$.
\end{prp}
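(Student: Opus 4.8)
The plan is to argue by contradiction, so assume $\Theta_{A,<0}\ne0$; note $d=n-t=3\ge2$, so $A$ is automatically normal and Proposition~\ref{14} and Lemma~\ref{33} apply. First I would invoke Proposition~\ref{14}: after a $\chi$-homogeneous coordinate change, $\fB(k)$ holds for some $k\in\{1,2\}$, with $g_k,g_2\notin\mm_P^3$. Since each of $k=1,2$ satisfies $\fA(k)$ or $\fB(k)$, while Lemma~\ref{33} forbids both $\fA(1)\wedge\fB(2)$ and $\fA(2)\wedge\fB(1)$, it follows that $\fB(1)$ and $\fB(2)$ both hold and $\fA(1),\fA(2)$ both fail; in particular $g_1,g_2\notin\mm_P^3$, and since $g_1,g_2\in\mm_P^2$ by minimality of $n$, each $g_j$ has order exactly $2$.

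Next I would pin down the monomial structure. Feeding $\fB(1)$ into the computation in the proof of Proposition~\ref{14} with $k=1$ — whose inequalities \eqref{34},\eqref{35} use only \eqref{1},\eqref{5},\eqref{10} and are thus valid for any $k\le t$ admitting $\fB(k)$ — yields, after relabeling $x_4\leftrightarrow x_5$ and rescaling, that $x_1x_4$ occurs in $g_1$ and $x_1x_5$ in $g_2$, so $p_1=w_1+w_4$, $p_2=w_1+w_5$; then \eqref{5} gives $w_3>w_4+w_5$. Running $\fB(2)$ through the same computation with $k=2$ gives $x_2x_{\mu_2}$ in $g_2$ with $\mu_2\in\{4,5\}$ and $x_2^{a_1}x_{\mu_1}$ in $g_1$ with $\mu_1\ne\mu_2$; comparing $p_2=w_2+w_{\mu_2}$ with $p_2=w_1+w_5$ gives $w_1=w_2$ when $\mu_2=5$, while when $\mu_2=4$ one has $w_1=w_2+w_4-w_5$ and $w_3>2w_4$, and a short elimination over $\mu_1\in\{1,2,3,5\}$ — using that $\fA(2)$ fails so $\mu_1\ne2$, together with $w_1\ge w_2\ge w_3>2w_4\ge w_4>w_5$ — rules out each possibility unless $w_4=w_5$, so again $w_1=w_2$. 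Hence $w_1=w_2>w_3>w_4+w_5$, so $x_1$ and $x_2$ can occur in $g_1,g_2$ only linearly and only times monomials in $x_4,x_5$; thus I can write $g_1=x_1\ell_1+x_2\ell_1'+g_1^0$ and $g_2=x_1\ell_2+x_2\ell_2'+g_2^0$ with $\ell_j,\ell_j'\in\mm_P\cap K[x_4,x_5]$ and $g_j^0\in\mm_P^2\cap K[x_3,x_4,x_5]$.

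The decisive step is geometric. Let $L=V(x_3,x_4,x_5)\subset\Spec\bar P$ be the $2$-plane spanned by $x_1,x_2$. Because $\ell_j,\ell_j'\in\mm_P$ and $g_j^0\in\mm_P^2$, both $g_1,g_2$ and the partials $\p_1g_j,\p_2g_j,\p_3g_j$ vanish identically on $L$, so $L\subset V(\bar\aa)$ and along $L$ the Jacobian matrix $(\p_ig_j)$ has its first three columns zero, whereas $\p_4g_j|_L$ and $\p_5g_j|_L$ are linear forms in $x_1,x_2$. Consequently every $2\times2$ minor of the Jacobian restricted to $L$ is a scalar multiple of the single binary quadratic form $f=(\p_4g_1\,\p_5g_2-\p_5g_1\,\p_4g_2)|_L\in K[x_1,x_2]$. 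But $f$ is a form in two variables, so $V(f)\subset L$ has dimension $\ge1$, and along $V(f)$ the Jacobian has rank $<2$; hence $V(\bar\aa)$ is singular along a positive-dimensional set, contradicting that $J_{\bar A}$ is $\mm_{\bar A}$-primary (Proposition~\ref{37}, using Remark~\ref{48}). Therefore $\Theta_{A,<0}=0$.

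The hard part will be the weight bookkeeping of the second step — especially the elimination forcing $w_1=w_2$ in the case $\mu_2=4$ — and, conceptually, seeing why $n=5$ is essential: the non-vanishing part of the Jacobian along $L$ has exactly $n-3=2$ columns, so its maximal minor is a binary quadratic form, which must vanish somewhere; for $n\ge6$ (as in Example~\ref{22}) there are $\ge3$ such columns and the minors can have no common zero, so the same plane $L$ no longer produces a contradiction.
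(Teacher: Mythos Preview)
Your proof is correct and follows essentially the same route as the paper: reduce via Proposition~\ref{14} and Lemma~\ref{33} to $\fB(1)\wedge\fB(2)$, then show that modulo $\langle x_3,x_4,x_5\rangle$ the Jacobian ideal collapses to the single minor $M_{4,5}$, a binary form in $x_1,x_2$, contradicting $\mm$-primariness. Your detour through the case analysis forcing $w_1=w_2$ is sound but unnecessary: the paper observes directly ``for degree reasons'' that $\partial_ig_j$ vanishes on $L=V(x_3,x_4,x_5)$ for $i\le 3$, which follows already from $p_1=w_1+w_4$, $p_2=w_1+w_5$, $w_3>w_4+w_5$ and the failure of $\fA(1),\fA(2)$, without ever needing $w_1=w_2$. (Two cosmetic points: your ``$w_1=w_2>w_3$'' should read ``$\ge$'', though equality causes no trouble; and $g_1\notin\mm_P^3$ is justified not by Proposition~\ref{14} directly but by the $\fB(1)$ monomial $x_1x_4$.) Your closing remark explaining why $n=5$ is the critical threshold is a nice addition not in the paper.
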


\begin{proof}
Assume that $\Theta_{A,<0}\ne0$.
By Proposition~\ref{14} and Lemma~\ref{33}, we must have $\fB(1)$ and $\fB(2)$.
Using \eqref{1}, \eqref{35}, and \eqref{34}, we may write
\begin{align*}
g_1&=x_1x_4+c_1x_2^jx_{k_1}+\cdots\\
g_2&=x_1x_5+c_2x_2x_{k_2}+\cdots
\end{align*}
with $\{k_1,k_2\}=\{4,5\}$ and $c_1,c_2\in K^*$.
As in the proof of Lemma~\ref{33}, the inequality~\eqref{5} can only hold true if $j=1$.
In this case, 
\[
A/(J_A+\ideal{x_3,\dots,x_n})=K\llangle x_1,x_2\rrangle/\ideal{\left\vert\frac{\p g}{\p(x_4,x_5)}\right\vert}.
\]
for degree reasons (see \eqref{1}), and hence $J_A$ is not $\mm_A$-primary.
This contradicts to the isolated singularity hypothesis.
\end{proof}

\section{Counter-examples}\label{18}

\begin{proof}[Proof of Example~\ref{22}]
The sequence $g$ is clearly regular and defines a complete intersection as in \eqref{24}.
Note that $\eta$ in \eqref{21} agrees with $\eta=\delta_{1,2,3}$ in \eqref{7}.
Since $\deg(g_1)=10=\deg(g_2)$, \eqref{4} shows that $\eta$ has negative degree $\deg\eta=-1$.

It remains to check that $A$ has an isolated singularity, that is, the Jacobian ideal $J_A$ from \eqref{25} is $\mm_A$-primary.
To this end, we may assume that $K=\bar K$ which enables us to argue geometrically on the variety 
\[
\bar X:=\Spec\bar A\subset\AA_K^n
\]
with $\bar A$ as in \eqref{36} using the Nullstellensatz.

The ideal $J_A$ is the image in $A$ of the Jacobian ideal $\bar J_g\unlhd\bar P$ of $g$ generated by the $2\times2$-minors 
\[
M_{i,j}:=\left\vert\frac{\p g}{\p(x_ix_j)}\right\vert
\]
of the Jacobian matrix of $g$ which reads
\[
\frac{\p g}{\p x}=
\begin{pmatrix}
x_4 & x_5 & 2x_3 & x_1-5x_4^4 & x_2 & 0 & 5x_7^4 & \cdots & 5x_n^4\\ 
x_5 & x_6 & 2x_3 & 0 & x_1 & x_2+5x_6^4 & 5c_7x_7^4 & \cdots & 5c_nx_n^4
\end{pmatrix}.
\]
With this notation we have to show that 
\[
\Sing\bar X=V(g,\bar J_g)=\{0\}.
\]
Due to those $2\times2$-minors of $\frac{\p g}{\p x}$ which involve only the columns $3,7,8,9,\dots,n$, only one of components $x_3,x_7,x_8,x_9,\dots,x_n$ of any $x\in\Sing\bar X$ can be non-zero.
We may therefore reduce to the case $n\leq 7$.

Because of the $3$rd column of $\frac{\p g}{\p x}$, we have $\bar J_g\cap K[x_1,\dots,x_6]\supseteq x_3 I$ where 
\[
I:=\ideal{x_4-x_5,x_5-x_6,x_1-x_2,x_1-5x_4^4,x_2+5x_6^4}.
\]
Note that $V(I)$ is the $x_3$-axis which is not contained in $V(g)$. 
It follows that $\Sing\bar X\cap V(x_7)$ is contained in the hyperplane $V(x_3)$. 
Similarly because of the $7$th column of $\frac{\p g}{\p x}$ and setting $c:=c_7$, we have $\bar J_g\cap K[x_1,\dots,\widehat{x_3},\dots,x_7]\supseteq x_7 I'$ where 
\[
I':=\ideal{cx_4-x_5,cx_5-x_6,cx_2-x_1,x_1-5x_4^4,x_2+5x_6^4}.
\]
Using $c^9+1\ne0$, we find that $V(I')$ is the $x_7$-axis and conclude $\Sing\bar X\cap V(x_3)\subset V(x_7)$ as before.
Summarizing the two cases, $\Sing\bar X$ is in fact contained in $V(x_3,x_7)$. 

Fix a point $(x_1,x_2,0,x_4,x_5,x_6,0)\in\Sing\bar X$.
Successively using the the equations
\begin{align*}
M_{1,2}&=x_4x_6-x_5^2=0,\\
M_{2,5}&=x_1x_5-x_2x_6=0,\\
g_2&=x_1x_5+x_2x_6+x_6^5=0,\\
M_{4,5}&=x_1(x_1-5x_4^4)=0,\\
M_{5,6}&=x_2(x_2+5x_6^4)=0,
\end{align*}
we derive
\[
x_4=0\Rightarrow x_5=0\Rightarrow x_2x_6=0\Rightarrow x_6=0\Rightarrow x_1=x_2=0.
\]
Similarly $x_6=0$ leaves no possibility except $x=0$ and $x_5=0$ reduces to one of these two cases by $M_{1,2}=0$.

Assume now that $x_4,x_5,x_6$ are all non zero. 
Then the minors $M_{1,5}$, $M_{2,4}$, $M_{2,5}$, $M_{2,6}$ give equations
\[
x_1x_4=x_2x_5,\quad x_1=5x_4^4,\quad x_1x_5=x_2x_6,\quad x_2=-5x_6^4.
\]
Substituting into $g$, we obtain
\[
g_1=2x_1x_4-x_4^5=9x_4^5,\quad g_2=2x_2x_6+x_6^5=-9x_6^5
\]
and hence $x_4=x_6=0$ contradicting our assumption.
\end{proof}

\bibliographystyle{amsalpha}
\bibliography{negder}
\end{document}